\newtheorem{theorem}{Theorem}
\newtheorem{lemma}{Lemma}
\newtheorem{definition}{Definition}
\numberwithin{equation}{section}
\newcommand{\Z}{\mathbb{Z}}
\newcommand{\Q}{\mathbb{Q}}
\newcommand{\N}{\mathbb{N}}
\newcommand{\R}{\mathbb{R}}
\newcommand{\A}{\mathbb{A}}
\newcommand{\IA}{\tilde{\mathbb{A}}}
\newcommand{\Comp}{\mathbb{C}}
\newcommand{\Pl}{\mathcal{P}}
\newcommand{\IPl}{\tilde{\mathcal{P}}}
\newcommand{\M}{\tilde{M}}
\title{On distribution of points with conjugate algebraic integer coordinates close to planar curves}
\author{V.\,Bernik, F.\,G\"otze, A.\,Gusakova}
\date{}
\begin{document}

\maketitle

\begin{abstract}

Let $\varphi:\R\rightarrow \R$ be a continuously differentiable function on an interval $J\subset\R$ and let $\boldsymbol{\alpha}=(\alpha_1,\alpha_2)$ be a point with algebraic conjugate integer coordinates of degree $\leq n$ and of height $\leq Q$. Denote by $\M^n_\varphi(Q,\gamma, J)$ the set of points $\boldsymbol{\alpha}$ such that $|\varphi(\alpha_1)-\alpha_2|\leq c_1 Q^{-\gamma}$. In this paper we show that for a real $0<\gamma<1$ and any sufficiently large $Q$ there exist positive values $c_2<c_3$, which are independent of $Q$, such that $c_2\cdot Q^{n-\gamma}<\# \M^n_\varphi(Q,\gamma, J)< c_3\cdot Q^{n-\gamma}$.

\end{abstract}

\section{Introduction}

An important and interesting topic in the theory of Diophantine approximation is the distribution of algebraic numbers and algebraic integers \cite{BeBeGo10, Cas57, Spr67, Sch80}. In this paper we consider problems related to the distribution of points with algebraic conjugate integer coordinates in the plane.

Let us start with some useful notation. Let $n$ be a positive integer and $Q>1$ be a sufficiently large real number. Given a polynomial $P(t)=a_nt^n+\ldots+a_1t+a_0\in\Z[t]$ denote by $H(P)=\max\limits_{0\leq j \leq n}{|a_j|}$ the height of the polynomial $P$, and by $\deg P$ the degree of the polynomial $P$. We define the following classes of integer polynomials with bounded height and degree:
\[
\Pl_{n}(Q):=\{P\in\Z[t]:\,\deg P\leq n,\, H(P)\leq Q\}.
\]
\[
\IPl_{n}(Q):=\{P\in\Z[t]:\,\deg P=n,\, H(P)\leq Q,\, a_n=1\}.
\]
Denote by $\#S$ the cardinality of a finite set $S$ and by $\mu_k S$ the Lebesgue measure of a measurable set $S\subset \R^k$, $k\in\N$. Furthermore, denote by $c_j>0$, $j\in \N$ positive constants independent of $Q$. We are going to use the Vinogradov symbol $A\ll B$, which means that there exists a constant $c>0$ independent of $A$ and $B$ such that $A\leq c\cdot B$. We will write $A\asymp B$ when $A\ll B$ and $B\ll A$.

Now let us introduce the concept of an algebraic integer point. A point $\boldsymbol{\alpha}=(\alpha_1,\alpha_2)$ is called an {\it algebraic point} if $\alpha_1$ and $\alpha_2$ are roots of the same irreducible polynomial $P\in\Z[t]$. If the leading coefficient $a_n$ of polynomial $P$ is equal to 1, then a point $\boldsymbol{\alpha}$ is called an {\it algebraic integer point}. The polynomial $P$ is called the minimal polynomial of the point $\boldsymbol{\alpha}$. Denote by $\deg(\boldsymbol{\alpha})=\deg P$ the degree of the point $\boldsymbol{\alpha}$ and by $H(\boldsymbol{\alpha})=H(P)$ the height of the point $\boldsymbol{\alpha}$. We denote by $\A^2$ (respectively $\IA^2$) the set of algebraic points (respectively integer algebraic points). Furthermore, we define the following sets:
\[
\A_n^2(Q):=\left\{\boldsymbol{\alpha}\in \A^2:\, \deg{\boldsymbol{\alpha}}\leq n,\, H(\boldsymbol{\alpha})\leq Q\right\},
\] 
\[
\IA_n^2(Q):=\left\{\boldsymbol{\alpha}\in \IA^2:\, \deg{\boldsymbol{\alpha}}= n,\, H(\boldsymbol{\alpha})\leq Q\right\}.
\] 

The problem of determining the number of integer points in regions and bodies of $\R^k$ can be naturally generalized to estimating the number of rational points in domains of Euclidean spaces. Let $f:J_0\rightarrow \R$ be a continuously differentiable function defined on a finite open interval $J_0\subset\R$. Let us consider the following set:
\[
N_f(Q,\gamma, J):=\left\{\left(\frac{p_1}{q},\,\frac{p_2}{q}\right)\in\Q^2:\,0<q\leq Q,\, \frac{p_1}{q}\in J,\,\left|f\left(\frac{p_1}{q}\right)-\frac{p_2}{q}\right|<Q^{-\gamma}\right\},
\]
where $J\subset J_0$ and $0\leq\gamma < 2$. Thus, the quantity $\# N_f(Q,\gamma, J)$ denotes the number of rational points with bounded denominators lying within a certain neighborhood of the curve parametrized by $f$. The problem is to estimate the value $\# N_f(Q,\gamma, J)$. This question was considered by Huxley  \cite{H96}, Vaughan, Velani \cite{VoVe06} and Beresnevich, Dickinson, Velani \cite{BeDiVe07} and with some additional restrictions on the function $f$ it was proved that
\[
\# N_f(Q,\gamma,J)\asymp Q^{3-\gamma}.
\]
This result was obtained using methods of metric number theory introduced by Schmidt in \cite{Sch80}. 

The following natural extension of this question is the problem of distribution of algebraic points $\boldsymbol{\alpha}\in\A_n^2(Q)$ near smooth curves. Let $\varphi:J_0\rightarrow \R$ be a continuously differentiable function defined on a finite open interval $J_0\subset\R$ satisfying the conditions:
\begin{equation}\label{eq1_1}
\sup\limits_{x\in J_0}|\varphi'(x)|:= c_{4} < \infty,\qquad\#\{x\in J_0:\varphi(x)=x\} := c_{5}<\infty.
\end{equation}
Define the following set:
\[
M^n_\varphi(Q,\gamma, J):= \left\{\boldsymbol{\alpha}\in\A_n^2(Q):\,\alpha_1\in J, |\varphi(\alpha_1)-\alpha_2|\ll Q^{-\gamma}\right\},
\]
where $J\subset J_0$. The goal is to estimate the number $\# M^n_\varphi(Q,\gamma, J)$. A first attempt to solve this problem for $0<\gamma\leq\frac12$ has been made in \cite{BeGoKu14}. This result was complemented by lower bound of the right order for $0<\gamma<\frac34$ \cite{BeGoGu16a} and finally by lower and upper bounds of the same order for $0<\gamma<1$ \cite{BeGoGu16b}. We are going to state the final result in the following form: for any smooth function $\varphi$ with conditions \eqref{eq1_1} we have $\# M^n_\varphi(Q,\gamma, J) \asymp Q^{n+1-\gamma}$ for $Q>Q_0(n, J, \varphi,\gamma)$ and $0< \gamma < 1$.

We will consider the same problem in the case of integer algebraic points. Let
\[
\M^n_\varphi(Q,\gamma, J):= \left\{\boldsymbol{\alpha}\in\IA_n^2(Q):\,\alpha_1\in J, |\varphi(\alpha_1)-\alpha_2|<c_1Q^{-\gamma}\right\}.
\]

\begin{theorem}\label{main}
For any smooth function $\varphi$ with conditions \eqref{eq1_1} there exist positive values $c_{2},c_{3}>0$ such that
\[
c_{2}\cdot Q^{n-\gamma}<\# \M^n_\varphi(Q,\gamma, J) < c_{3}\cdot Q^{n-\gamma}
\]
for $Q>Q_0(n, J,\varphi,\gamma)$, $0< \gamma < 1$, $n\ge 3$ and a sufficiently large constant $c_1>0$ in definition of the set $\M^n_\varphi(Q,\gamma, J)$.
\end{theorem}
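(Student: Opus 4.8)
The plan is to mimic the strategy used for the non-integer case $M^n_\varphi(Q,\gamma,J)$, but carrying the extra constraint $a_n=1$. Recall that an integer algebraic point $\boldsymbol\alpha=(\alpha_1,\alpha_2)$ is a pair of distinct roots of an irreducible monic $P\in\IPl_n(Q)$. The condition $|\varphi(\alpha_1)-\alpha_2|<c_1Q^{-\gamma}$ forces $\alpha_1$ and $\alpha_2$ to be close; the heuristic is that, after fixing $a_1,\dots,a_{n-1}$ (with $a_0$ essentially determined modulo a short interval by the location of a root), there are $\asymp Q^{n-1}$ admissible coefficient vectors, and for each the root near a given abscissa $x\in J$ may be moved through an interval of length $\asymp Q^{-1}$ in $a_0$, while the closeness-to-the-curve condition $|\varphi(\alpha_1)-\alpha_2|<c_1Q^{-\gamma}$ costs a further factor $Q^{-\gamma}$ after the change of variables from coefficients to root pairs. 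Multiplying, $Q^{n-1}\cdot Q^{-1}\cdot\,$(length of admissible $x$-range $\asymp 1$)$\,\cdot Q^{-\gamma}$-type bookkeeping is what should yield the order $Q^{n-\gamma}$; note this is exactly the $M^n_\varphi$ count divided by $Q$, reflecting the loss of the free leading coefficient.

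For the \textbf{upper bound} I would proceed as follows. First reduce to counting, for fixed monic $P$, pairs of roots $(\alpha_1,\alpha_2)$ with $\alpha_1\in J$ and $|\varphi(\alpha_1)-\alpha_2|<c_1Q^{-\gamma}$; since $\deg P\le n$ there are $O(1)$ such pairs per polynomial, so it suffices to bound the number of relevant monic polynomials. Partition $J$ into $\asymp Q^{\gamma}$ intervals $I$ of length $\asymp Q^{-\gamma}$. If $\boldsymbol\alpha$ contributes with $\alpha_1\in I$, then both roots $\alpha_1,\alpha_2$ lie in $c_1Q^{-\gamma}$-neighborhood of $I$, because $|\varphi(\alpha_1)-\varphi(x_I)|\le c_4|\alpha_1-x_I|$ is also $O(Q^{-\gamma})$ by \eqref{eq1_1}; so $P$ has two roots in an interval of length $\asymp Q^{-\gamma}$. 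A standard estimate (resultant / Mahler-measure argument, or the derivative bound $|P'(\alpha_1)|\ll H(P)Q^{-\gamma}$ combined with $|P(x_I)|\ll H(P)Q^{-2\gamma}$) then shows that the coefficient vectors $(1,a_{n-1},\dots,a_0)$ of such $P$ are trapped near a fixed affine subspace: essentially $a_0$ is confined to an interval of length $\ll Q^{1-2\gamma}$ once $a_1,\dots,a_{n-1}$ and $I$ are chosen, while the remaining coefficients are free in $[-Q,Q]$. Counting lattice points gives $\ll Q^{n-2}\cdot(Q^{1-2\gamma}+1)$ polynomials per interval $I$ when $\gamma\le 1/2$, hence $\ll Q^{\gamma}\cdot Q^{n-2}\cdot Q^{1-2\gamma}=Q^{n-1-\gamma}$ — this is too strong / not matching, so the delicate point is that one must \emph{not} overcount: the right bookkeeping is that one root in $I$ pins one linear relation among the $a_i$, leaving $a_0$ free in length $\asymp Q^{-\gamma}\cdot H$ worth of choices, i.e.\ $\ll Q^{1-\gamma}$ values, and summing over the $\asymp Q^{\gamma}$ intervals and the $\asymp Q^{n-2}$ free middle coefficients yields $\ll Q^{n-\gamma}$. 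I would make this precise via the expansion $P(t)=P(\alpha_1)+P'(\alpha_1)(t-\alpha_1)+\dots$, treating $a_0$ as the dependent variable.

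For the \textbf{lower bound} I would use a counting/regular-system style argument: exhibit $\gg Q^{n-\gamma}$ genuinely distinct monic irreducible integer polynomials of degree exactly $n$ with $H(P)\le Q$ each possessing two real roots $\alpha_1\approx x$, $\alpha_2\approx\varphi(x)$ at the required distance. Concretely, fix $a_2,\dots,a_{n-1}$ arbitrarily in a suitable range (giving the factor $Q^{n-2}$), and then use a two-dimensional version of the argument in \cite{BeGoGu16b}: choose $a_0,a_1$ so that the monic polynomial $P$ has a root in a prescribed $Q^{-\gamma}$-window around $x$ and a second root in a prescribed $Q^{-\gamma}$-window around $\varphi(x)$; the set of admissible $(a_0,a_1)$ has measure $\asymp Q^{-\gamma}\cdot Q$-ish after rescaling, producing $\gg Q^{1-\gamma}$ integer pairs, provided $c_1$ is taken large enough to absorb the implied constants. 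Summing over a net of $x$-values spaced $\asymp Q^{-\gamma}$ apart in $J$ (factor $Q^{\gamma}$) and over the middle coefficients gives $\gg Q^{n-2}\cdot Q^{\gamma}\cdot Q^{1-\gamma}\cdot Q^{-\gamma}=Q^{n-\gamma}$ after discarding the (negligibly many, by \eqref{eq1_1}) degenerate configurations and the reducible or non-primitive polynomials, which by a sieve/Hilbert-irreducibility-type bound number $o(Q^{n-\gamma})$. The hypothesis $n\ge 3$ is exactly what is needed here so that enough middle coefficients remain free after two linear conditions are imposed.

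The step I expect to be the main obstacle is the \emph{upper bound}, specifically showing that one cannot do better than the linear relation on $a_0$: one has to rule out the scenario where many polynomials share the same near-root in $I$ with a higher-order tangency (i.e.\ $|P'(\alpha_1)|$ anomalously small), which would let $a_0$ range over a longer interval and blow up the count. This is handled by separately estimating the contribution of polynomials with $|P'(\alpha_1)|$ small — a classical "polynomials with close roots" argument bounding $\#\{P\in\IPl_n(Q):|P'(\alpha)|<Q^{1-\eta}\text{ for some root }\alpha\in J\}\ll Q^{n-\eta+\varepsilon}$ — and choosing the threshold $\eta$ compatibly with $\gamma<1$; the condition $\gamma<1$ guarantees the error terms stay below the main term $Q^{n-\gamma}$.
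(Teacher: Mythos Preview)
Your covering scheme --- slice the strip into $\asymp Q^{\gamma}$ pieces of width $\asymp Q^{-\gamma}$ and count locally --- is exactly the reduction the paper makes (see the paragraph after the statement of Theorem~\ref{main} and the proofs of Theorems~\ref{th1}--\ref{th2}).  However, both halves of your sketch contain real problems.

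\textbf{Upper bound.}  The sentence ``both roots $\alpha_1,\alpha_2$ lie in a $c_1Q^{-\gamma}$-neighbourhood of $I$ \ldots\ so $P$ has two roots in an interval of length $\asymp Q^{-\gamma}$'' is false.  From $|\varphi(\alpha_1)-\alpha_2|<c_1Q^{-\gamma}$ and $\alpha_1\in I$ you only get that $\alpha_2$ lies in a $O(Q^{-\gamma})$-interval around $\varphi(x_I)$, which is \emph{not} near $x_I$ in general (cf.\ condition~\eqref{eq1_1}: $\varphi(x)=x$ only finitely often).  Hence the close-root / $|P'|$-small machinery you invoke is irrelevant.  What actually happens is that you get \emph{two} independent linear constraints $|P(d_1)|\ll Q^{1-\gamma}$ and $|P(d_2)|\ll Q^{1-\gamma}$ with $d_1\approx x_I$, $d_2\approx\varphi(x_I)$; fixing $a_2,\ldots,a_{n-1}$ these pin $(a_0,a_1)$ to $\ll Q^{2(1-\gamma)}$ lattice points (Lemma~\ref{lm6}), and the count is $Q^{\gamma}\cdot Q^{n-2}\cdot Q^{2-2\gamma}=Q^{n-\gamma}$.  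This is precisely the paper's Theorem~\ref{th1}.  Your own arithmetic ``$Q^{\gamma}\cdot Q^{n-2}\cdot Q^{1-\gamma}$'' gives $Q^{n-1}$, not $Q^{n-\gamma}$, so the bookkeeping you wrote down does not close.

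\textbf{Lower bound.}  You have the difficulty backwards: it is the lower bound that carries almost all the work.  Fixing $a_2,\ldots,a_{n-1}$ and trying to choose $(a_0,a_1)$ to place two roots does not, on its own, produce irreducible monic polynomials, nor does it guarantee that the two nearby roots are real, nor does it avoid the genuine obstruction that some squares $\Pi$ contain \emph{no} points of $\A_n^2(Q)$ at all (see the discussion before Definition~1).  The paper handles this with three ingredients you do not mention: (i) a geometry-of-numbers argument via successive minima (Lemma~\ref{lm_Minkowski}) to manufacture $n$ linearly independent polynomials of degree $n-1$ with controlled values and derivatives at both $x_1,x_2$; (ii) a combination $P(t)=t^n+p\sum_j s_jP_j(t)$ with a suitable prime $p$, so that Eisenstein's criterion (Lemma~\ref{lm_Eisenstein}) forces irreducibility; (iii) the restriction to $(\tfrac12,\tfrac12)$-\emph{special} squares (Definitions~1--2 and Lemma~\ref{lm_BeGoGu}) to exclude the bad set where $|P'(x_i)|$ is small.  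Your ``sieve/Hilbert-irreducibility-type bound'' for reducible polynomials is not enough here, because the construction has to produce polynomials with two prescribed near-roots, and the naive candidates could all be reducible; the Eisenstein trick is what makes irreducibility automatic.
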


It should be noted that a lower bound of $\# \M^n_\varphi(Q,\gamma, J)$ for $0< \gamma \leq \frac12$ was obtained earlier \cite{GoGu16}. Hence we will assume for the lower bound that $\gamma >\frac12$.

The proof of Theorem \ref{main} is based on the following idea. We consider the strip $L^n_\varphi(Q,\gamma, J):= \left\{\mathbf{x}\in\R^2: x_1\in J, |\varphi(x_1)-x_2|<c_1Q^{-\gamma}\right\}$ and cover it with squares $\Pi=I_1\times I_2$  with sides of length $\mu_1 I_1=\mu_1 I_2 = c_6Q^{-\gamma}$, where $c_6=c_1 / \left(\frac12+c_4\right)$ (the full description of this scheme is given in \cite{BeGoGu16b}). Thus, in order to prove Theorem \ref{main} we need to estimate the number of integer algebraic points lying in such a square $\Pi$.

Let us consider here a more general case, namely, the case of a rectangle $\Pi=I_1\times I_2$, where $\mu_1 I_i = c_6Q^{-\gamma_i}$. 

\begin{theorem}\label{th1}
Let $\Pi=I_1\times I_2$ be a rectangle with a midpoint $\mathbf{d}$ and sides $\mu_1 I_i=c_6Q^{-\gamma_{i}}$, $i=1,2$. Then for $0<\gamma_1,\gamma_2< 1$, $n\ge 2$ and $Q>Q_0(n,\gamma, \mathbf{d})$ the estimate
\[
\# \left(\IA_n^2(Q)\cap\Pi\right) < c_{7}\cdot Q^{n}\mu_2\Pi
\]
holds, where
\[
c_{7}=2^{3n+8}n^2\rho_n(d_1)\rho_n(d_2)\left|d_1-d_2\right|^{-1}\quad\text{and}\quad\rho_n(x)=\left(\left(|x|+1\right)^{n+1}-1\right)\cdot |x|^{-1}.
\]
\end{theorem}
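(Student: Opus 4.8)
The plan is to count monic integer polynomials $P$ of degree $n$ having two roots $\alpha_1, \alpha_2$ close to the prescribed rectangle $\Pi = I_1 \times I_2$, and to extract from such a pair of roots a strong arithmetic constraint on the coefficients of $P$. Write $P(t) = t^n + a_{n-1}t^{n-1} + \cdots + a_0$. If $\alpha_1 \in I_1$ and $\alpha_2 \in I_2$ with $|\alpha_i - d_i| \le \tfrac12 c_6 Q^{-\gamma_i}$, then $P(\alpha_1) = P(\alpha_2) = 0$, so by the mean value / divided-difference identity $P(\alpha_1) - P(\alpha_2) = (\alpha_1 - \alpha_2)\,P^{[1]}(\alpha_1,\alpha_2) = 0$, where $P^{[1]}$ is the first divided difference. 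Since $\alpha_1 \ne \alpha_2$ (the midpoints $d_1 \ne d_2$ and $Q$ is large, so the rectangle does not meet the diagonal), we get $P^{[1]}(\alpha_1, \alpha_2) = 0$. The polynomial $R(x,y) := P^{[1]}(x,y) = \sum_{i+j \le n-1} a_{i+j+1}\, x^i y^j$ (with $a_n = 1$) is a \emph{symmetric} integer polynomial of degree $n-1$ in $(x,y)$ whose coefficients are the $a_k$ for $k = 1, \dots, n$, and it vanishes at $(\alpha_1, \alpha_2)$. Evaluating $R$ near the midpoint $\mathbf d$ and using $|\alpha_i - d_i|$ small, we obtain $|R(d_1, d_2)| \ll Q^{-\min(\gamma_1,\gamma_2)} \cdot (\text{something polynomial in } |d_i|)$, i.e. the linear form in the integer vector $(a_1, \dots, a_n)$ given by evaluating the divided difference at $(d_1,d_2)$ is forced to be very small.

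The second, complementary constraint comes from $P(\alpha_1) = 0$ itself: expanding about $d_1$, the linear form $\sum_{k=0}^{n} a_k d_1^k$ (again $a_n=1$) is $\ll Q^{-\gamma_1}$ times a polynomial in $|d_1|$. Likewise $P(\alpha_2)=0$ gives $\sum_k a_k d_2^k \ll Q^{-\gamma_2}(\cdots)$. The key point is now a counting lemma for integer points in a convex body: the polynomial $P$ is determined by the integer vector $(a_0, a_1, \dots, a_{n-1}) \in \Z^n$ lying in the box $[-Q,Q]^n$, and we have just shown this vector satisfies two (or three) independent small-value linear inequalities — one of "width" $\asymp Q^{-\gamma_1}$ coming from the Taylor expansion at $d_1$, and one of width $\asymp Q^{-\gamma_2}$ coming from the vanishing of the divided difference (or from $P(\alpha_2)=0$). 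Provided these two linear forms are linearly independent — which holds because $1, d_1, d_1^2, \dots$ and the divided-difference coefficient vector are not proportional when $d_1 \ne d_2$, the factor $|d_1-d_2|^{-1}$ in $c_7$ quantifying exactly this non-degeneracy — the number of integer vectors in $[-Q,Q]^n$ satisfying both is $\ll Q^{n} \cdot Q^{-\gamma_1} \cdot Q^{-\gamma_2} \cdot (\text{explicit constant}) \asymp Q^n \mu_2\Pi$, since $\mu_2\Pi = c_6^2 Q^{-\gamma_1-\gamma_2}$. Tracking the explicit dependence on $d_1, d_2$ through the Taylor coefficients $\sum_j \binom{k}{j} d_i^{k-j}$ and summing the geometric-type series $\sum_{k\le n}(|d_i|+1)^k$ produces the factors $\rho_n(d_1), \rho_n(d_2)$, and the dimension/combinatorial overhead produces the $2^{3n+8} n^2$; one then checks that each pair $(\alpha_1,\alpha_2) \in \IA_n^2(Q) \cap \Pi$ corresponds to at most $n^2$ choices of root-pairs of a fixed $P$, so the polynomial count dominates the point count up to this bounded factor.

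More concretely I would proceed in the following steps. First, fix $\boldsymbol\alpha = (\alpha_1,\alpha_2) \in \IA_n^2(Q) \cap \Pi$ with monic minimal polynomial $P$; reduce to counting admissible $P$, losing only a factor $\le n^2$. Second, derive the two linear inequalities: (a) $|P(d_1)| \le \tfrac12 c_6 Q^{-\gamma_1}\sup_{I_1}|P'|$, and bound $\sup_{I_1}|P'|$ crudely by $n^2 (|d_1|+1)^{n-1}$ times $H(P) \le Q$ — wait, this is too lossy; instead expand $P(\alpha_1) = \sum_k a_k \alpha_1^k = 0$ and write $\sum_k a_k d_1^k = -\sum_k a_k(\alpha_1^k - d_1^k)$, bounding the right side by $|\alpha_1 - d_1| \cdot n (|d_1|+1)^{n-1}\max|a_k|$; (b) the analogous bound for $d_2$ from $P(\alpha_2)=0$. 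Third, view $(a_0,\dots,a_{n-1})$ as an integer point and apply a lattice-point-in-slab estimate: the region is the intersection of $[-Q,Q]^n$ with two slabs of normal directions $\mathbf v_1 = (1,d_1,\dots,d_1^{n-1})$ and $\mathbf v_2 = (1,d_2,\dots,d_2^{n-1})$ and thicknesses $\asymp Q^{1-\gamma_1}, Q^{1-\gamma_2}$ respectively (after clearing the $a_n$-contribution, which is a known shift since $a_n=1$). The number of such points is $\ll Q^{n-2} \cdot (Q^{1-\gamma_1}) \cdot (Q^{1-\gamma_2}) / |\det(\mathbf v_1,\mathbf v_2 \text{ restricted to a coordinate 2-plane})|$; the relevant $2\times 2$ minor is $d_2 - d_1$ or a Vandermonde-type quantity, giving the $|d_1-d_2|^{-1}$.

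The main obstacle I anticipate is making the lattice-point count fully explicit with the stated constant rather than merely $\ll Q^n\mu_2\Pi$: one must choose the correct pair of coordinate axes in which the two linear forms $\mathbf v_1,\mathbf v_2$ have a $2\times 2$ minor bounded below by a controlled multiple of $|d_1-d_2|$ (a Vandermonde argument in two of the coordinates $a_j, a_{j+1}$, say, picking $j$ so that the relevant power of $\max(|d_1|,|d_2|,1)$ is smallest), and then bound the number of integer points on each of the remaining $n-2$ coordinates by $2Q+1 \le 3Q$ while the two constrained coordinates range over an area $\ll (Q^{1-\gamma_1})(Q^{1-\gamma_2})|d_1-d_2|^{-1}$ times a power of $(\max|d_i|+1)$ absorbed into $\rho_n$. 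Handling the edge cases where $\Pi$ straddles the line $x=y$ or where $|d_1|$ or $|d_2|$ is small (so $\rho_n(d_i) \to n+1$) requires only minor adjustments, since $\rho_n$ was defined precisely to stay bounded and positive there, and the condition $Q > Q_0(n,\gamma,\mathbf d)$ absorbs all the lower-order terms from the Taylor expansions.
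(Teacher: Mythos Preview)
Your approach is correct and essentially the same as the paper's: bound $|P(d_i)| \ll Q\,\mu_1 I_i$ by Taylor-expanding the minimal polynomial at its root $\alpha_i\in I_i$, note that each polynomial contributes at most $n^2$ points, and then count integer coefficient vectors $(a_0,\dots,a_{n-1})\in[-Q,Q]^n$ lying in the two slabs determined by $\mathbf v_i=(1,d_i,\dots,d_i^{n-1})$. The paper resolves what you call the ``main obstacle'' exactly as you anticipated, by fixing the higher coefficients $a_2,\dots,a_{n-1}$ trivially (a factor $\le(2Q+1)^{n-2}$) and then bounding the number of admissible pairs $(a_0,a_1)$ via the $2\times2$ minor $d_2-d_1$ using the elementary Lemma~\ref{lm6}; your divided-difference discussion at the start is unnecessary, since the two constraints $|P(d_1)|,|P(d_2)|$ small already suffice.
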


The case of lower bound is more difficult. It is easy to prove that there exist rectangles $\Pi$ of size $\mu_2 \Pi \asymp Q^{-1}$ such that $\# \left(\A_n^2(Q)\cap\Pi)\right)= 0$, and since $\IA_n^2(Q)\subset\A_n^2(Q)$ we have $\# \left(\IA_n^2(Q)\cap\Pi)\right)= 0$ for such rectangles. It means that we cannot obtain non-zero lower bounds for all rectangles $\Pi$. In particular, it is easy to show that certain neighborhoods of algebraic points of small height and small degree do not contain any other algebraic points $\boldsymbol{\alpha}\in\A_n^2(Q)$ at all. In order to avoid such domains we use the concept of a {\it $(v_1,v_2)$-special} square, which has been introduced in \cite{BeGoGu16b}.

\begin{definition}
Let $\Pi=I_1\times I_2$ be a square with midpoint $\mathbf{d}$, $d_1\neq d_2$ and sides $\mu_1 I_1 = \mu_1 I_2 = c_6Q^{-\gamma}$ such that $\frac12< \gamma< 1$. We say that the square $\Pi$ satisfies the {\it $(l,v_1,v_2)$-condition} if $v_1+v_2=1$ and there exist at most $\delta^3\cdot 2^{l+3}Q^{1+2\lambda_{l+1}}\mu_2\Pi$ polynomials $P\in\Pl_2(Q)$ of the form $P(t)=a_2t^2+a_1t+a_0$ satisfying the inequalities  
\[
\begin{cases}
|P(x_{0,i})|<h\cdot Q^{-v_i},\quad i=1,2,\\
\delta Q^{\lambda_{l+1}}\leq|a_2|<\delta Q^{\lambda_l}
\end{cases}
\]
for some point $\mathbf{x}_0\in \Pi$, where $\delta = 2^{-L-17}h^{-2}\cdot (d_1-d_2)^2$, $L=\left[\textstyle\frac{3-2\gamma}{1-\gamma}\right]$ and
\[
\lambda_l=
\begin{cases}
1-\frac{(l-1)(1-\gamma)}{2},\quad 1\leq l\leq L+1,\\
\gamma-\frac12,\quad l= L+2,\\
0,\quad l\ge L+3.
\end{cases}
\]
\end{definition}

\begin{definition}
The square $\Pi=I_1\times I_2$ with sides $\mu_1 I_1 = \mu_1 I_2 = c_6Q^{-\gamma}$ such that $\frac12< \gamma< 1$ is called a {\it $(v_1,v_2)$-special} square if it satisfies the {\it $(l,v_1,v_2)$-condition} for all $1\leq l\leq L+2$.
\end{definition}

The following theorem can be proved for  {\it $(v_1,v_2)$-special} squares.

\begin{theorem}\label{th2}
For all {\it $\left(\textstyle\frac12,\textstyle\frac12\right)$-special} squares $\Pi=I_1\times I_2$ with midpoints $\mathbf{d}$, $d_1\neq d_2$ and sides $\mu_1 I_1 = \mu_1 I_2 =c_6Q^{-\gamma}$, where $\frac12< \gamma< 1$ and $c_6>c_0(n,\mathbf{d})$, there exists a value $c_{8}=c_{8}(n,\mathbf{d},\gamma)>0$ such that
\[
\#\left(\IA_n^2(Q)\cap\Pi\right)> c_{8}\cdot Q^{n}\mu_2\Pi
\]
for $Q>Q_0(n,\mathbf{d},\gamma)$ and $n\ge 3$.
\end{theorem}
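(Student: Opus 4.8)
The plan is to produce many integer algebraic points inside a $(\frac12,\frac12)$-special square $\Pi$ by a counting-with-regularity argument: first show that the number of monic integer polynomials $P\in\IPl_n(Q)$ whose two leading behaviours place a pair of conjugate roots near $d_1$ and $d_2$ is $\gg Q^n\mu_2\Pi$, and then discard the polynomials that are "bad" in the sense of being reducible, having the wrong root configuration, or having a derivative too small at the relevant roots. Concretely, I would parametrise such polynomials by the linearised system $|P(d_i)|<h Q^{-1/2}$, $i=1,2$, together with a normalisation on one further coefficient (say $|a_{n-1}|$ or $|a_1|$), and count integer points $(a_0,\dots,a_{n-1})$ in the resulting convex body; since the two linear forms $P(d_1),P(d_2)$ are linearly independent when $d_1\ne d_2$, the volume of this body is $\asymp Q^{n-2}\cdot (Q^{-1/2})^2 = Q^{n-1}\asymp Q^n\mu_2\Pi$, and for $Q$ large the number of lattice points matches the volume up to constants. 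This gives the right order of candidate polynomials.

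Next I would convert "a polynomial with $|P(d_i)|$ small" into "a polynomial with a root near $d_i$." Using $|P(d_i)|\le h Q^{-1/2}$ and a lower bound $|P'(\xi)|\gg 1$ for $\xi$ near $d_i$ (valid on the square because $d_1\neq d_2$ and $n$ is fixed, after the usual estimate $|P'(d_i)|\gg Q\cdot\mathrm{dist}$ or a Taylor expansion argument controlling $P'$ by $H(P)$), one gets a genuine root $\alpha_i$ of $P$ with $|\alpha_i-d_i|\ll Q^{-1/2}\le c_6 Q^{-\gamma}/2$ provided $c_6$ is large enough — this is where the hypothesis $c_6>c_0(n,\mathbf d)$ enters. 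Thus each surviving polynomial yields a point $(\alpha_1,\alpha_2)\in\Pi$ with conjugate integer algebraic coordinates, hence a point of $\IA_n^2(Q)\cap\Pi$ (irreducibility must be arranged — see below). Distinctness of the resulting points across distinct polynomials is automatic since a point of degree $n$ determines its minimal polynomial.

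The main obstacle, and the place where the special-square hypothesis does its work, is ruling out the degenerate contributions: (i) polynomials that are reducible, and more dangerously (ii) polynomials $P$ for which the "small value at both $d_i$" is explained by a low-degree factor — in the extreme case a quadratic factor $P_2(t)=a_2t^2+a_1t+a_0$ with $|P_2(x_{0,i})|$ small and $a_2$ in one of the dyadic ranges $[\delta Q^{\lambda_{l+1}},\delta Q^{\lambda_l})$. The $(l,v_1,v_2)$-conditions for $1\le l\le L+2$ are precisely the statement that such quadratic obstructions are few — at most $\delta^3 2^{l+3}Q^{1+2\lambda_{l+1}}\mu_2\Pi$ of them in each range — so summing the induced loss of degree-$n$ polynomials over all dyadic ranges $l$ (and over the scales $\lambda_l$ down to $\lambda_{L+3}=0$) gives a total of $o(Q^n\mu_2\Pi)$, or at worst a quantity smaller than half the main term, which can be absorbed. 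One also needs to remove polynomials whose derivative at $\alpha_i$ is too small (these are the ones lying in thin neighbourhoods of discriminant subvarieties); these are controlled by the same dyadic decomposition of $|a_n\cdot(\text{leading part})|$ together with a resultant/discriminant estimate bounding the measure of the set of $\mathbf x\in\Pi$ close to two roots of some low-degree polynomial. I expect the bookkeeping of these dyadic sums — matching the definition of $\lambda_l$, the constant $\delta=2^{-L-17}h^{-2}(d_1-d_2)^2$, and the threshold $L=[(3-2\gamma)/(1-\gamma)]$ — to be the technical heart of the argument; the geometry (lattice-point count in a box, root extraction from small values) is standard. The restriction $n\ge 3$ is used so that, after removing the quadratic and linear obstructions, the leading "generic" stratum of polynomials is non-empty and still of full order $Q^n\mu_2\Pi$.
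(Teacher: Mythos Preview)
Your counting scheme has the exponents wrong in several places, and this is symptomatic of a deeper problem. First, $Q^{n-2}\cdot(Q^{-1/2})^2=Q^{n-3}$, not $Q^{n-1}$; second, $Q^{n}\mu_2\Pi=c_6^2Q^{n-2\gamma}$, which equals neither $Q^{n-1}$ nor $Q^{n-3}$ for $\tfrac12<\gamma<1$. The threshold $|P(d_i)|<hQ^{-1/2}$ is not the right one: to place a root inside $I_i$ of length $c_6Q^{-\gamma}$ when $|P'|\asymp Q$ you need $|P(d_i)|\ll Q^{1-\gamma}$, and with that threshold the lattice-point count is indeed $\asymp Q^{n-2\gamma}$. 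Your localization step, with only $|P'(\xi)|\gg 1$, yields $|\alpha_i-d_i|\ll Q^{-1/2}$, which for $\gamma>\tfrac12$ is \emph{larger} than $c_6Q^{-\gamma}$ for any fixed $c_6$, so the root need not lie in $\Pi$ at all. Finally, the $(\tfrac12,\tfrac12)$-special hypothesis is a statement about quadratic polynomials in $\Pl_2(Q)$; it does not directly control reducibility of monic degree-$n$ polynomials with small values, and your plan to ``discard reducible polynomials and quadratic factors'' is the whole difficulty, not a cleanup step.

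The paper's proof proceeds along quite different lines. It never counts monic degree-$n$ polynomials directly. Instead it works with polynomials in $\Pl_{n-1}(Q)$: the special-square hypothesis enters via Lemma~\ref{lm_BeGoGu}, which bounds the measure of the set $L\subset\Pi$ where some $P\in\Pl_{n-1}(Q)$ has both $|P(x_i)|$ small and $\min_i|P'(x_i)|$ small. On the complement $B=\Pi\setminus L$ one uses Minkowski's second theorem to find $n$ linearly independent $P_1,\dots,P_n\in\Pl_{n-1}$ with controlled values and derivatives, and then builds a monic degree-$n$ polynomial by the Bugeaud--Davenport--Schmidt device
\[
P(t)=t^n+p\sum_{j=1}^n s_jP_j(t),
\]
where $p$ is a prime (from Bertrand's postulate) not dividing $\det(a_{j,k})$ and the $s_j$ are integers chosen near the solution of a linear system. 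Irreducibility of $P$ then follows from Eisenstein's criterion --- this is the key idea you are missing, and it is why $n\ge 3$ is needed. A covering argument on $B$ then converts ``every $\mathbf x\in B$ has a nearby point of $\IA_n^2(Q_1)$'' into the lower bound $\#(\IA_n^2(Q)\cap\Pi)\gg Q^n\mu_2\Pi$.
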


\section{Auxiliary statements}

This section contains several lemmas which will be used to prove Theorems \ref{th1} and \ref{th2}. Some of them are related to geometry of numbers, see \cite{Cas97}. The first paper discussing approximations by algebraic integers are due to Davenport and Schmidt \cite{DavSch}. Recently their approach has been further developed by Bugeaud \cite{Bug03} and we shall use ideas of this paper. 

\begin{lemma}[Minkowski 2nd theorem on successive minima]\label{lm_Minkowski}
Let $K\subset\R^n$ be a bounded central symmetric convex body with  successive minima $\tau_1,\ldots,\tau_n$ and volume $V(K)$.
Then
\[
\frac{2^n}{n!}\leq\tau_1\tau_2\ldots\tau_nV(K)\leq 2^n.
\]
\end{lemma}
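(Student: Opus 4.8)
The plan is to prove the two inequalities separately. Throughout I fix linearly independent integer vectors $\mathbf g_1,\dots,\mathbf g_n$ realising the minima, $\mathbf g_i\in\tau_iK$ (if an infimum is not attained one runs the argument with $\tau_i+\varepsilon$ and lets $\varepsilon\to0$ at the end), write $\|\mathbf x\|_K=\inf\{t>0:\mathbf x\in tK\}$ for the gauge of $K$, and let $V_k=\operatorname{span}_{\R}(\mathbf g_1,\dots,\mathbf g_k)$, $\Lambda_k=\Z^n\cap V_k$ be the associated flag. It is convenient to work with the equivalent form of the statement for an arbitrary full lattice $\Lambda$ (with $d(\Lambda)$ in place of $1$, all cases being equivalent under a linear change of variables), since the induction below produces sublattices $\Lambda_{n-1}$ of non-unit covolume.

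\emph{Lower bound.} This is the easy half. Since $\tau_i^{-1}\mathbf g_i\in K$ and $K$ is centrally symmetric and convex, $K$ contains the cross-polytope $C=\operatorname{conv}\{\pm\tau_1^{-1}\mathbf g_1,\dots,\pm\tau_n^{-1}\mathbf g_n\}$, whose volume, computed as a linear image of the standard cross-polytope, is $\tfrac{2^n}{n!}(\tau_1\cdots\tau_n)^{-1}|\det(\mathbf g_1,\dots,\mathbf g_n)|$. As the $\mathbf g_i$ are independent integer vectors, $|\det(\mathbf g_1,\dots,\mathbf g_n)|\ge1$, so $V(K)\ge V(C)\ge\tfrac{2^n}{n!}(\tau_1\cdots\tau_n)^{-1}$.

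\emph{Upper bound.} Here I would argue by induction on $n$, reducing to a one-dimensional claim. The induction rests on two inputs. The first is a rigidity property of the flag: for each $k$ the successive minima of $K\cap V_k$ with respect to $\Lambda_k$ are precisely $\tau_1,\dots,\tau_k$ (the inequality ``$\ge$'' because independent lattice points of $\Lambda_k$ in $\mu(K\cap V_k)$ are independent integer vectors in $\mu K$; ``$\le$'' because $\mathbf g_1,\dots,\mathbf g_k$ already lie in $V_k$); a consequence is that every $\mathbf p\in\Z^n\cap(V_k\setminus V_{k-1})$ satisfies $\|\mathbf p\|_K\ge\tau_k$, for otherwise $\mathbf p,\mathbf g_1,\dots,\mathbf g_{k-1}$ would be $k$ independent integer vectors in $\mu K$ with $\mu<\tau_k$. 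The second input is Brunn's theorem: for a linear functional $\ell$ the function $s\mapsto V_{n-1}(K\cap\ell^{-1}(s))^{1/(n-1)}$ is concave on $\ell(K)$, and together with central symmetry this makes the central slice $K\cap\ker\ell$ the one of largest $(n-1)$-volume. Taking $\ker\ell=V_{n-1}$ and letting $\pi$ be the projection onto $\R^n/V_{n-1}$, Fubini and Brunn give $V(K)\le V_{n-1}(K\cap V_{n-1})\cdot w$, where $w$ is the length of the interval $\pi(K)$ measured against the quotient lattice $\pi(\Z^n)$, of covolume $d(\Lambda_{n-1})^{-1}$. Feeding $K\cap V_{n-1}$, with minima $\tau_1,\dots,\tau_{n-1}$ and lattice $\Lambda_{n-1}$, into the induction hypothesis gives $\tau_1\cdots\tau_{n-1}V_{n-1}(K\cap V_{n-1})\le2^{n-1}d(\Lambda_{n-1})$, so the desired bound $\tau_1\cdots\tau_nV(K)\le2^n$ follows once I establish the one-dimensional estimate $\tau_n w\le2\,d(\Lambda_{n-1})^{-1}$, that is, that the open interval $\pi(\tau_n\operatorname{int}K)$ contains no nonzero point of $\pi(\Z^n)$; the base case $n=1$, where $K=[-a,a]$ has $\tau_1=a^{-1}$ and $\tau_1\cdot2a=2$, is immediate.

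\emph{The main obstacle.} The crux is exactly this last one-dimensional width estimate, and it is where the naive attempts break down: a boundary point of $\tau_nK$ (such as $\mathbf g_n$) may project to the interior of $\pi(\tau_nK)$, so one cannot read the width off $\mathbf g_n$ directly; and one cannot linearly rescale the coordinates in the basis $\mathbf g_1,\dots,\mathbf g_n$ to force $\tau_nK$ into a lattice-point-free box, because shrinking coordinates with respect to a general basis need not shrink the gauge $\|\cdot\|_K$. The correct argument — which I would import from Cassels — shows that a hypothetical nonzero $\bar{\mathbf p}\in\pi(\Z^n)\cap\pi(\tau_n\operatorname{int}K)$ forces the existence of a genuine lattice point $\mathbf p\in\Z^n\setminus V_{n-1}$ with $\|\mathbf p\|_K<\tau_n$, contradicting the definition of $\tau_n$ via the rigidity fact above; producing this $\mathbf p$ requires taking a preimage of $\bar{\mathbf p}$ in $\tau_n\operatorname{int}K$ and correcting its $V_{n-1}$-component using the lattice $\Lambda_{n-1}$ inside the section $K\cap V_{n-1}$ (which, by rigidity, already contains $\mathbf g_1,\dots,\mathbf g_{n-1}$), exploiting convexity and central symmetry. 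A routine passage to the limit then removes the non-attainment hypothesis, and the reduction to a general lattice made at the outset delivers the statement in the form stated.
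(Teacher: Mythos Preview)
The paper does not prove this lemma at all; it simply cites \cite{Cas97} and \cite{Gruber}. So there is no in-paper argument to compare against, and I assess your proposal on its own merits.

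Your lower bound is correct and standard.

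Your upper bound, however, has a genuine gap: the ``one-dimensional width estimate'' $\tau_n w\le 2\,d(\Lambda_{n-1})^{-1}$, equivalently the assertion that $\pi(\tau_n\operatorname{int}K)$ meets $\pi(\Z^n)$ only in $0$, is \emph{false}. Take $n=2$ and the parallelogram
\[
K=\{(x,y)\in\R^2:\ |x|\le 2,\ |y-0.4\,x|\le 1\}.
\]
One checks directly that $\tau_1=\tfrac12$, attained uniquely (up to sign) at $\mathbf g_1=(1,0)$, and $\tau_2=0.6$ at $(1,1)$; hence $V_1$ is the $x$-axis, $\Lambda_1=\Z\times\{0\}$, $d(\Lambda_1)=1$, and $\pi(x,y)=y$. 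Then $\pi(\tau_2\operatorname{int}K)=(-1.08,\,1.08)$, which contains the nonzero lattice image $1\in\pi(\Z^2)$, whereas no integer point $(a,1)$ lies in $\tau_2\operatorname{int}K$ (the only candidates $a\in\{-1,0,1\}$ each violate $|1-0.4a|<0.6$). Thus the lifting you describe --- ``correcting the $V_{n-1}$-component using $\Lambda_{n-1}$'' to produce a genuine lattice point of norm $<\tau_n$ outside $V_{n-1}$ --- is impossible here, and your appeal to Cassels is misplaced: Cassels does not prove this (false) statement. Numerically, your scheme would only yield $\tau_1\tau_2V(K)\le 2\cdot\tau_2 w=4.32$ in this example, not $2^2$.

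The proofs in the cited references proceed quite differently. The classical argument (Davenport; Cassels, Ch.~VIII) applies to $\tau_nK$ a linear contraction by factors $\tau_i/\tau_n$ along the directions $\mathbf g_i$, and then shows --- using the very definition of the $\tau_i$ together with convexity and symmetry --- that half of the contracted body still packs $\R^n$ by $\Z^n$-translates, so Blichfeldt/Minkowski bounds its volume by $2^n$. Your Brunn--slice induction does not seem to recover the sharp constant; without an additional idea it will carry an extra dimensional factor.
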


For a proof, see  \cite[pp. 203]{Cas97}, \cite[pp. 59]{Gruber}.

\begin{lemma}[Bertrand postulate]\label{lm_Bertran}
For any $n\in\N$, $n \ge 2$ there exists a prime $p$ such that $n<p<2n$.
\end{lemma}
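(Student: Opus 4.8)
This is a classical result, and the approach I would take is Erdős's elementary proof via bounds on the central binomial coefficient $\binom{2n}{n}$. The plan is to assume for contradiction that for some $n$ there is no prime $p$ with $n < p < 2n$, and to derive a contradiction by comparing upper and lower bounds for $\binom{2n}{n}$. First I would dispose of small $n$ (say $n \le 4000$ or so) by exhibiting an explicit short chain of primes $2, 3, 5, 7, 13, 23, 43, 83, 163, 317, 631, 1259, 2503, 4001$, each less than twice its predecessor, so that every interval $(n, 2n)$ with $n$ in that range contains one of them. For large $n$ the argument is asymptotic.

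For the main range, I would first record the estimate $\prod_{p \le x} p < 4^{x}$ for the primorial (provable by induction on $x$, splitting an even argument off and treating odd $x = 2m+1$ via $\binom{2m+1}{m} < 4^{m}$, since the primes in $(m+1, 2m+1]$ all divide that binomial coefficient). Then I would analyze the prime factorization $\binom{2n}{n} = \prod_{p \le 2n} p^{e_p}$: by Legendre's formula the exponent $e_p$ satisfies $p^{e_p} \le 2n$, so primes up to $\sqrt{2n}$ contribute at most $(2n)^{\sqrt{2n}}$; primes $p$ with $\sqrt{2n} < p \le 2n/3$ have exponent at most $1$; primes with $2n/3 < p \le n$ contribute nothing (the exponent $\lfloor 2n/p \rfloor - 2\lfloor n/p\rfloor = 2 - 2 = 0$ there); and, under the contradiction hypothesis, there are no primes with $n < p < 2n$. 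Combining these with the primorial bound gives
\[
\binom{2n}{n} \le (2n)^{\sqrt{2n}} \cdot 4^{2n/3}.
\]
On the other hand, $\binom{2n}{n} \ge \dfrac{4^{n}}{2n}$, since it is the largest of the $2n+1$ terms summing to $4^{n}$ (or, being a bit more careful, the largest of the $2n$ terms other than the two $1$'s, giving $\ge 4^n/(2n)$). Putting the two inequalities together yields $4^{n/3} \le (2n)^{1+\sqrt{2n}}$, which fails for all sufficiently large $n$; a short calculation pins down the threshold below the cutoff handled by the explicit list.

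The only genuinely delicate point is bookkeeping the ranges of $p$ so that the exponents are correctly bounded — in particular the vanishing of the exponent on $(2n/3, n]$, which is what lets the contradiction hypothesis bite — together with making the elementary inequality $4^{n/3} > (2n)^{1+\sqrt{2n}}$ hold from an explicit $n$ onward so that it dovetails with the finite check. Since this lemma is entirely standard I would in fact simply cite a textbook (e.g. \cite{Gruber} or any analytic number theory reference) rather than reproduce the argument, but the sketch above is the proof I would write out if a self-contained treatment were wanted.
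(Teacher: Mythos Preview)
Your sketch of Erd\H{o}s's proof is correct and standard; the paper itself gives no proof at all, merely citing Chebyshev's 1850 result and pointing to \cite[Theorem~2.4]{Nesterenko}, which is precisely the ``simply cite a textbook'' option you yourself propose. One minor slip: \cite{Gruber} is a geometry-of-numbers reference and would not be the right citation here; a number-theory text such as the paper's \cite{Nesterenko} is more appropriate.
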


It was proved by Chebyshev in 1850. A proof can be found, for example, in \cite[Theorem 2.4]{Nesterenko}.

\begin{lemma}[Eisenstein's criterion]\label{lm_Eisenstein}
Let $P\in\Z[t]$ be a polynomial of the form $P(t)=a_nt^n+a_{n-1}t^{n-1}+\ldots +a_1t+a_0$. If there exists a prime number $p$ such that:
\begin{equation}\label{eq2_1}
\begin{cases}
a_n\not\equiv 0 \mod{p},\\
a_i\equiv 0 \mod{p},\quad i=0,\ldots, n-1\\
a_0\not\equiv 0 \mod{p^2},
\end{cases}
\end{equation}
then $P$ is irreducible over the rational numbers.
\end{lemma}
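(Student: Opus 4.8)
The plan is to argue by contradiction, combining Gauss's lemma with reduction modulo the prime $p$. Suppose $P$ were reducible over $\Q$, say $P=FG$ with non-constant $F,G\in\Q[t]$. Clearing denominators and factoring out contents, Gauss's lemma (the product of primitive polynomials over $\Z$ is primitive) shows that one may in fact take the factors to have integer coefficients: $P=f\cdot g$ with $f,g\in\Z[t]$, $\deg f=k\ge 1$, $\deg g=m\ge 1$ and $k+m=n$. Writing $f(t)=b_kt^k+\cdots+b_0$ and $g(t)=c_mt^m+\cdots+c_0$, comparison of leading and constant coefficients gives $a_n=b_kc_m$ and $a_0=b_0c_0$.

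The first step is to reduce the identity $P=fg$ modulo $p$. By the first two lines of \eqref{eq2_1}, the image $\bar P$ in $(\Z/p\Z)[t]$ equals $\bar a_n t^n$ with $\bar a_n\neq 0$. From $a_n=b_kc_m$ and $p\nmid a_n$ we get $p\nmid b_k$ and $p\nmid c_m$, so $\bar f$ and $\bar g$ retain their degrees $k$ and $m$ and are in particular nonzero. Since $(\Z/p\Z)[t]$ is a unique factorization domain in which, up to units, $t$ is the only irreducible dividing $\bar a_n t^n$, the equality $\bar a_n t^n=\bar f\,\bar g$ forces $\bar f=\bar b_k t^{k}$ and $\bar g=\bar c_m t^{m}$.

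The second step extracts the contradiction from the constant terms. Because $k\ge 1$ and $m\ge 1$, the monomials $\bar f$ and $\bar g$ have vanishing constant term, so $p\mid b_0$ and $p\mid c_0$; hence $p^{2}\mid b_0c_0=a_0$, contradicting the third line of \eqref{eq2_1}. Therefore $P$ admits no factorization into polynomials of positive degree over $\Q$, i.e. $P$ is irreducible over the rational numbers.

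I do not anticipate a genuine obstacle here: the argument is entirely elementary once Gauss's lemma is available. The only point requiring a little care is the passage from a $\Q$-factorization to a $\Z$-factorization of the same degrees, for which I would either invoke Gauss's lemma in the form just stated or observe directly that rescaling $F$ and $G$ by suitable rationals and matching contents produces integer factors $f,g$ of the same positive degrees. Note that the Eisenstein hypotheses \eqref{eq2_1} are not used in that step; they enter only in the short reduction-modulo-$p$ computation above.
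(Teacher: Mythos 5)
Your proof is correct. Note that the paper does not prove this lemma at all --- it simply cites Eisenstein's original article and Prasolov's book --- and your argument (reduce a Gauss-lemma-normalized factorization $P=fg$ over $\Z$ modulo $p$, observe $\bar f\,\bar g=\bar a_n t^n$ in $(\Z/p\Z)[t]$ forces both constant terms $b_0,c_0$ to be divisible by $p$, contradicting $a_0\not\equiv 0 \bmod p^2$ in \eqref{eq2_1}) is precisely the standard proof given in such references, so there is nothing to add.
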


For a proof see \cite{Eis}, \cite[Theorem 2.1.3]{Prasolov}.

\begin{lemma}\label{lm_polynomial}
Consider a point $x\in\R$ and a polynomial $P$ with zeros $\alpha_1,\alpha_2,\ldots,\alpha_n$ 
where $|x-\alpha_1| = \min\limits_{i} |x-\alpha_i|$. Then
\[
|x-\alpha_1| \le n\cdot|P(x)|\cdot|P'(x)|^{-1}.
\]
\end{lemma}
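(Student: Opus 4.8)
The plan is to exploit the logarithmic derivative of $P$. Factoring $P(t)=a_n\prod_{i=1}^{n}(t-\alpha_i)$ with $a_n\neq 0$, one has at any point $x$ with $P(x)\neq 0$ the standard identity
\[
\frac{P'(x)}{P(x)}=\sum_{i=1}^{n}\frac{1}{x-\alpha_i}.
\]
So the first step is to dispose of the degenerate cases: if $P(x)=0$, then $x$ coincides with one of the zeros of $P$, hence with the nearest one, so $|x-\alpha_1|=0$ and the asserted inequality holds trivially; and if $P'(x)=0$ (with $P(x)\neq 0$), then the right-hand side $n\,|P(x)|\,|P'(x)|^{-1}$ is $+\infty$ and there is again nothing to prove. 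Thus we may assume $P(x)\neq 0$ and $P'(x)\neq 0$ and use the identity above.

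Next I would apply the triangle inequality to the right-hand side of the identity and then invoke the minimality hypothesis $|x-\alpha_1|=\min_{i}|x-\alpha_i|$, which gives $|x-\alpha_i|^{-1}\le|x-\alpha_1|^{-1}$ for every $i$:
\[
\frac{|P'(x)|}{|P(x)|}=\left|\sum_{i=1}^{n}\frac{1}{x-\alpha_i}\right|\le\sum_{i=1}^{n}\frac{1}{|x-\alpha_i|}\le\frac{n}{|x-\alpha_1|}.
\]
Rearranging this chain of inequalities yields $|x-\alpha_1|\le n\,|P(x)|\,|P'(x)|^{-1}$, which is exactly the claim.

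There is essentially no serious obstacle in this argument; it is a short and classical estimate. The only points requiring a word of care are the two degenerate cases treated above, and the observation that the bound remains valid when some of the $\alpha_i$ are non-real — the modulus $|x-\alpha_i|$ still makes sense, and the comparison $|x-\alpha_1|\le|x-\alpha_i|$ is precisely what the hypothesis $|x-\alpha_1|=\min_i|x-\alpha_i|$ provides.
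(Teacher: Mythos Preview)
Your proof is correct and follows essentially the same approach as the paper: both use the logarithmic-derivative identity $P'(x)/P(x)=\sum_i (x-\alpha_i)^{-1}$, apply the triangle inequality, and invoke the minimality of $|x-\alpha_1|$ to bound the sum by $n\,|x-\alpha_1|^{-1}$. You are in fact slightly more careful than the paper in explicitly disposing of the degenerate cases $P(x)=0$ and $P'(x)=0$.
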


\begin{proof}
Evaluate the polynomial $P$ and its derivative $P'$ at the point $x\neq\alpha_i$ for $i=1,2,\ldots,n$. Since
\[
|P(x)|\cdot|P'(x)|^{-1}\leq\sum\limits_{i=1}^n|x-\alpha_i|^{-1}\leq n\cdot|x-\alpha_1|^{-1},
\]
we obtain
\[
|x-\alpha_1| \le n\cdot|P(x)|\cdot|P'(x)|^{-1}.
\] 
\end{proof}

\begin{lemma}[see \cite{Fel51}]\label{lm5}
For any subset of roots $\alpha_{i_1},\ldots,\alpha_{i_s}$, $1\leq s\leq n$, of the polynomial $P(t)=a_nt^n+\ldots+a_1t+a_0$ we have $\prod\limits_{j=1}^{s}|\alpha_{i_j}|\leq (n+1)2^n\cdot H(P)\cdot |a_n|^{-1}$.  
\end{lemma}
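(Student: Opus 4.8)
The plan is to bound the product of any subset of roots by splitting the roots into two groups — those with absolute value $\le 1$ and those with absolute value $>1$ — and controlling only the contribution of the large ones. Concretely, let $\alpha_1,\ldots,\alpha_n$ be all the roots of $P$, ordered so that $|\alpha_1|\ge|\alpha_2|\ge\cdots\ge|\alpha_n|$, and suppose $|\alpha_1|\ge\cdots\ge|\alpha_k|>1\ge|\alpha_{k+1}|\ge\cdots\ge|\alpha_n|$. For any index subset $\{i_1,\ldots,i_s\}$ we have $\prod_{j=1}^s|\alpha_{i_j}|\le\prod_{i=1}^k|\alpha_i|$, since deleting a root of modulus $\le 1$ only increases the product and the remaining factors are each $\le$ the $k$ largest moduli. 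So it suffices to bound $\prod_{i=1}^k|\alpha_i|$.

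First I would express this product via the elementary symmetric functions / coefficients of $P$. Since $P(t)=a_n\prod_{i=1}^n(t-\alpha_i)$, the coefficient $a_{n-k}$ equals $(-1)^k a_n\, e_k(\alpha_1,\ldots,\alpha_n)$, where $e_k$ is the $k$-th elementary symmetric polynomial. The term $\alpha_1\cdots\alpha_k$ (product of the $k$ largest-modulus roots) is one of the $\binom{n}{k}$ monomials in $e_k$; moreover, because $|\alpha_1\cdots\alpha_k|$ is the \emph{largest} such monomial in absolute value (the $k$ largest moduli), we get $|\alpha_1\cdots\alpha_k|\le\binom{n}{k}^{-1}\cdot\binom{n}{k}\cdot$ — more carefully, $|e_k(\alpha)|$ can be as small as a single term after cancellation, so instead I would argue directly: $|\alpha_1\cdots\alpha_k|\le |e_k(\alpha_1,\ldots,\alpha_n)|$ need not hold, so one should use that each of the $\binom{n}{k}$ monomials has modulus $\le|\alpha_1\cdots\alpha_k|$, hence $|e_k(\alpha)|$ could still be small — this is the subtle point. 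The clean way is: $|\alpha_1\cdots\alpha_k| = \max$ over $k$-subsets of $|\prod \alpha_{i_j}|$, and one bounds this maximum by comparing with a suitable coefficient. Since $|a_{n-j}/a_n| = |e_j(\alpha)| \le \binom{n}{j}|\alpha_1\cdots\alpha_j|\le\binom{n}{j}|\alpha_1|^j$ for all $j$, Cauchy's classical bound gives $|\alpha_1|\le 1+\max_j|a_{n-j}/a_n|\le 1+ n H(P)/|a_n|\le (n+1)H(P)/|a_n|$ (using $|a_n|\le H(P)$).

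To finish, I would iterate or use a sharper classical estimate: the product $|\alpha_1\cdots\alpha_k|$ equals $|a_{n-k}/a_n|$ divided by $|e_k(\alpha)/(\alpha_1\cdots\alpha_k)|$... which again is awkward. The cleanest route is the known inequality (a form of Landau's inequality, or just induction on Cauchy's bound applied to $P(t)/(t-\alpha_1)$, etc.), or simply: by Vieta, $|\alpha_1\cdots\alpha_k|\le\sum_{|S|=k}\bigl|\prod_{i\in S}\alpha_i\bigr| $ is false in general, so instead one uses that the polynomial $a_n\prod_{i=1}^k(t-\alpha_i)$ has leading coefficient $a_n$ and constant term $(-1)^k a_n\alpha_1\cdots\alpha_k$, while its other coefficients are bounded by combinations of coefficients of $P$; applying Lemma-type height bounds (e.g.\ that a factor of $P$ over $\Comp$ whose roots are among those of $P$ has height $\ll 2^n H(P)$, the Mahler/Gelfond factor estimate) yields $|a_n\alpha_1\cdots\alpha_k|\le(n+1)2^n H(P)$, hence $\prod_{j=1}^s|\alpha_{i_j}|\le|\alpha_1\cdots\alpha_k|\le(n+1)2^n H(P)|a_n|^{-1}$, as claimed.

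The main obstacle is obtaining the factor $2^n$ cleanly: one must invoke a Gelfond-type inequality bounding the Mahler measure (or height) of the monic-normalized partial product $\prod_{i=1}^k(t-\alpha_i)$ by that of $P$, since naive symmetric-function estimates lose control through cancellation. I would cite Feldman~\cite{Fel51} (as the statement already does) for exactly this bound, and the remainder is the elementary reduction of the general subset product to $\prod_{i=1}^k|\alpha_i|$ described above.
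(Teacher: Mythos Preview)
The paper does not prove this lemma; it simply cites Fel'dman~\cite{Fel51}. So there is no ``paper's proof'' to compare against, and your proposal ending with ``I would cite Feldman'' matches exactly what the paper does.

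That said, your write-up wanders through several dead ends (symmetric functions, Cauchy's bound, reverse-triangle attempts) before arriving at the right idea, and the final step is left as an appeal to a ``Gelfond-type inequality'' rather than carried out. The clean argument is short and you already have its two ingredients: (i) for any subset $\{i_1,\dots,i_s\}$,
\[
\prod_{j=1}^{s}|\alpha_{i_j}|\ \le\ \prod_{i:\,|\alpha_i|>1}|\alpha_i|\ =\ \frac{M(P)}{|a_n|},
\]
where $M(P)=|a_n|\prod_{i=1}^{n}\max(1,|\alpha_i|)$ is the Mahler measure (this is your reduction, stated without the detours); and (ii) the elementary bound $M(P)\le\|P\|_1\le(n+1)H(P)$, which follows e.g.\ from Jensen's formula or directly from $M(P)\le\|P\|_2\le\sqrt{n+1}\,H(P)$. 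Combining (i) and (ii) already gives $\prod_{j}|\alpha_{i_j}|\le(n+1)H(P)|a_n|^{-1}$, which is stronger than the stated bound; the extra factor $2^n$ in Fel'dman's formulation is slack you do not need to fight for. I would replace the exploratory middle of your proposal with this two-line argument.
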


\begin{lemma}[see {\cite{BeGoGu16b}}]\label{lm_BeGoGu}
Let $\Pi=I_1\times I_2$ be a square with midpoint $\mathbf{d}$, $d_1\neq d_2$ and sides $\mu_1 I_1 = \mu_1 I_2 =c_6Q^{-\gamma}$, where $\frac12< \gamma< 1$ and $c_6>c_0(n,\mathbf{d})$. Given positive values $v_1,v_2$ such that $v_1+v_2=n-1$, let $L=L_n(Q,\delta_n,\mathbf{v},\Pi)$ be the set of points $\mathbf{x}\in\Pi$ such that there exists a polynomial  $P\in\Pl_{n}(Q)$ satisfying the following system of inequalities:
\[
\begin{cases}
|P(x_i)|< h_n\cdot Q^{-v_i},\\
\min\limits_i\{|P'(x_i)|\}<\delta_n\cdot Q,\quad i=1,2,\\
\end{cases}
\]
where $h_n = \sqrt{\frac32(|d_1|+|d_2|)\cdot \max\left(1,3|d_1|,3|d_2|\right)^{n^2}}$. If $\Pi$ is a {\it $\left(\frac{v_1}{n-1},\frac{v_2}{n-1}\right)$-special} square, then
\[
\mu_2 L<\textstyle\frac14 \cdot\mu_2\Pi
\]
for $\delta_n<\delta_0(n,\mathbf{d})$ and $Q>Q_0(n,\mathbf{v},\mathbf{d},\gamma)$.
\end{lemma}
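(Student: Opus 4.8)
\emph{Sketch of the argument.} The plan is to realise $L$ as a union of small regions attached to individual polynomials, to bound the two‑dimensional measure of each such region, and then to control how many polynomials can occur by reducing to the quadratic case, where the $\left(\frac{v_1}{n-1},\frac{v_2}{n-1}\right)$-special hypothesis supplies exactly the required bound. By definition $L=\bigcup_{P}\sigma(P)$, the union over $P\in\Pl_n(Q)$ of
\[
\sigma(P)=\bigl\{\mathbf{x}\in\Pi:\ |P(x_i)|<h_n Q^{-v_i}\ (i=1,2),\ \min_i|P'(x_i)|<\delta_n Q\bigr\},
\]
so that $\mu_2 L\le\sum_P\mu_2\sigma(P)$. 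Using the symmetry of the two coordinates, it suffices to bound the contribution of those $P$ for which the minimum of the derivatives is attained at $x_1$ and to show it is $<\tfrac18\mu_2\Pi$; the other half is identical.

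For a single such $P$ I would first estimate $\mu_2\sigma(P)$. Since $I_1,I_2$ have length $c_6 Q^{-\gamma}$ and lie near $d_1\neq d_2$, a Taylor expansion shows $P$, $P'$ and $P''$ are nearly constant on each $I_i$ up to a factor controlled by $H(P)\le Q$ and the position of $\mathbf{d}$ — this is precisely what the constant $h_n$ and the hypothesis $c_6>c_0(n,\mathbf{d})$ encode. Hence, by Lemma \ref{lm_polynomial} applied at a reference point of $I_i$, the slice $\{x_i\in I_i:|P(x_i)|<h_n Q^{-v_i}\}$ is contained in boundedly many intervals around roots of $P$ of total length $\ll Q^{-v_i}|P'(x_i)|^{-1}$ when $|P'(x_i)|$ is not too small; and when $|P'(x_1)|<\delta_n Q$ is genuinely small, $x_1$ is forced into an even shorter neighbourhood of a cluster of roots of $P$, yielding an additional gain by a power of $\delta_n$. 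Multiplying the two one‑dimensional estimates expresses $\mu_2\sigma(P)$ in terms of $Q$, of $v_1+v_2=n-1$, of the derivative sizes, and of $\delta_n$.

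The decisive step is the counting of polynomials, and here one never sums over all of $\Pl_n(Q)$: only the $P$ with $\min_i|P'(x_i)|<\delta_n Q$ are relevant, and these possess a rigid structure. I would split them by the dyadic size of the leading coefficient $|a_n|$, using the ranges $\delta_n Q^{\lambda_{l+1}}\le|a_n|<\delta_n Q^{\lambda_l}$ for $l=1,\dots,L+2$ together with the range $|a_n|<\delta_n$, which for large $Q$ forces $a_n=0$ and is handled by descending induction on $n$ with base case $n=2$ (the choice $L=\bigl[\tfrac{3-2\gamma}{1-\gamma}\bigr]$ is what makes the profile $\lambda_l$ drop to $0$, so this descent terminates). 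Inside the $l$-th class I would apply Minkowski's theorem on successive minima (Lemma \ref{lm_Minkowski}) to the convex body of polynomials that are small at $\mathbf{x}$ with height $\le Q$: its first successive minima are realised by polynomials of degree $\le 2$ lying in $\Pl_2(Q)$, whose leading coefficient falls in a range $\delta Q^{\lambda_{l'+1}}\le|a_2|<\delta Q^{\lambda_{l'}}$ for an index $l'$ paired to $l$, which satisfy $|R(x_{0,i})|<hQ^{-v_i/(n-1)}$ at some $\mathbf{x}_0\in\Pi$, and such that the original point $\mathbf{x}$ lies in a region attached to $R$ of measure $\ll\delta_n\cdot Q^{-1-2\lambda_{l'+1}}\delta^{-3}2^{-l'-3}$. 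By the $\bigl(l',\frac{v_1}{n-1},\frac{v_2}{n-1}\bigr)$-condition there are at most $\delta^3 2^{l'+3}Q^{1+2\lambda_{l'+1}}\mu_2\Pi$ admissible $R$, so the $l$-th class contributes $\ll\delta_n\,\mu_2\Pi$ to $\sum_P\mu_2\sigma(P)$; summing over the $O\bigl(\tfrac{1}{1-\gamma}\bigr)$ values of $l$, adding the lower‑degree recursion, and choosing $\delta_n<\delta_0(n,\mathbf{d})$ small enough and $Q>Q_0(n,\mathbf{v},\mathbf{d},\gamma)$ large enough, the total falls below $\tfrac18\mu_2\Pi$.

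I expect the reduction‑and‑matching step to be the main obstacle: one must show that every relevant degree‑$n$ polynomial in a given leading‑coefficient class genuinely produces a quadratic polynomial of the precise shape quantified by the $(l,v_1,v_2)$-condition, with the index $l'$ correctly paired to $l$ and the exponents scaled to $v_i/(n-1)$, and then verify that the resulting family of measure bounds assembles to strictly less than $\tfrac14\mu_2\Pi$. It is exactly this requirement that pins down the constants $h$, $\delta$, $L$ and the piecewise‑linear profile $\lambda_l$ appearing in the definition of a special square. By contrast, the one‑dimensional estimate for $\mu_2\sigma(P)$ and the base case $n=2$ of the induction should be routine.
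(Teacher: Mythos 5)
First, note that this paper does not prove Lemma \ref{lm_BeGoGu} at all: it is imported verbatim from \cite{BeGoGu16b}, so there is no internal proof to match your argument against; your proposal has to stand on its own. As it stands it does not. The decomposition $L=\bigcup_P\sigma(P)$, the splitting according to which coordinate realises $\min_i|P'(x_i)|$, the product structure of $\sigma(P)$ over $I_1\times I_2$, and the one-dimensional measure estimates via Lemma \ref{lm_polynomial} are all routine and fine. But the entire weight of the lemma rests on the step you yourself flag as ``the main obstacle'': converting each dangerous degree-$n$ polynomial (two small values, one small derivative, leading coefficient in a dyadic range) into a \emph{quadratic} polynomial that satisfies the exact system in the $(l,v_1,v_2)$-condition --- correct height range $\delta Q^{\lambda_{l'+1}}\le|a_2|<\delta Q^{\lambda_{l'}}$ with a correctly paired index $l'$, correct exponents $v_i/(n-1)$, correct constant $h$ --- together with a per-quadratic measure bound that is exactly reciprocal to the cardinality bound $\delta^3 2^{l'+3}Q^{1+2\lambda_{l'+1}}\mu_2\Pi$ supplied by the special-square hypothesis. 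You assert this reduction but give no mechanism for it. In particular, the claim that the first successive minima of the convex body of polynomials small at $\mathbf{x}$ are ``realised by polynomials of degree $\le 2$'' is unsupported and not true in general: Lemma \ref{lm_Minkowski} is applied in the $n$- or $(n+1)$-dimensional coefficient space and its minimal vectors are arbitrary polynomials of degree $\le n$; nothing in your argument forces the degree down to $2$, and without that the special-square hypothesis (which only counts quadratics) never enters.

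There are also secondary gaps that would need attention even if the reduction were supplied: the dyadic classification is made on the leading coefficient $a_n$ of the degree-$n$ polynomial, whereas the $(l,v_1,v_2)$-condition constrains the coefficient $a_2$ of the quadratic, and the map between the two ranges (your ``paired index $l'$'') is precisely what has to be constructed and verified against the piecewise profile $\lambda_l$ and the value $L=\left[\frac{3-2\gamma}{1-\gamma}\right]$; the proposed descending induction on the degree changes the constraint $v_1+v_2=n-1$ and the constant $h_n$ at each step, so the base case and the bookkeeping of constants ($h_{n}$ versus $h$, $\delta_n$ versus $\delta$) must be made explicit; and the claimed bound $\mu_2\sigma(P)\ll\delta_n Q^{-1-2\lambda_{l'+1}}\delta^{-3}2^{-l'-3}$ attached to each quadratic is stated without derivation, yet it is exactly the quantity that must cancel the counting bound to give the final $\frac14\mu_2\Pi$. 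In short, the proposal is a plausible outline of the strategy of \cite{BeGoGu16b}, but the pivotal reduction-and-matching step is missing, so it cannot be accepted as a proof of Lemma \ref{lm_BeGoGu}.
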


\begin{lemma}[see {\cite{BeGoGu16b}}]\label{lm6}
Let $G=G(\mathbf{d},\mathbf{K})$, where $|d_1-d_2|>\varepsilon_1>0$, be a set of points $\mathbf{b}=(b_1,b_0)\in\Z^2$ such that
\[
|b_1d_i+b_0|\leq K_i,\quad i=1,2.
\]
Then
\[
\# G\leq \left(4\varepsilon_1^{-1}K_1+1\right)\cdot\left(4K_2+1\right).
\]
\end{lemma}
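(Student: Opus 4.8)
The plan is to view $G$ as the set of integer points of a parallelogram and to count them by slicing it along vertical segments. Put
\[
R=\{(x_1,x_0)\in\R^2:\ |x_1d_1+x_0|\le K_1,\ |x_1d_2+x_0|\le K_2\},
\]
so that $G=\Z^2\cap R$. Because $|d_1-d_2|>\varepsilon_1>0$, the two linear forms $(x_1,x_0)\mapsto x_1d_1+x_0$ and $(x_1,x_0)\mapsto x_1d_2+x_0$ are linearly independent, so $R$ is the preimage of the box $[-K_1,K_1]\times[-K_2,K_2]$ under an invertible linear map, i.e.\ a bounded, centrally symmetric (possibly degenerate) parallelogram. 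The two defining inequalities will be used in complementary ways: their difference controls the first coordinate, while either one separately controls a vertical slice.

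First, for $\mathbf{b}=(b_1,b_0)\in G$ the difference of the two inequalities gives
\[
|b_1|\,|d_1-d_2|=\bigl|(b_1d_1+b_0)-(b_1d_2+b_0)\bigr|\le K_1+K_2,
\]
hence $|b_1|<\varepsilon_1^{-1}(K_1+K_2)$, so the first coordinates occurring in $G$ form a set of at most $2\varepsilon_1^{-1}(K_1+K_2)+1$ integers. Second, for a fixed integer $b_1$ the inequality $|b_1d_2+b_0|\le K_2$ confines $b_0$ to an interval of length $2K_2$, which contains at most $2K_2+1$ integers (symmetrically, $|b_1d_1+b_0|\le K_1$ forces $b_0$ into at most $2K_1+1$ integers). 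Multiplying a bound for the number of admissible $b_1$ by a bound for the number of admissible $b_0$ per $b_1$ then gives an upper estimate for $\#G$ of the shape claimed, up to the precise constants.

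It remains to recover exactly $(4\varepsilon_1^{-1}K_1+1)(4K_2+1)$. If $K_1\ge K_2$ this is immediate: then $K_1+K_2\le2K_1$, and slicing $b_0$ with the $K_2$-inequality yields $\#G\le(2\varepsilon_1^{-1}(K_1+K_2)+1)(2K_2+1)\le(4\varepsilon_1^{-1}K_1+1)(4K_2+1)$. If $K_1<K_2$ one instead slices $b_0$ with the $K_1$-inequality and bounds the number of admissible $b_1$ by $4\varepsilon_1^{-1}K_2+1$, then compares the two products --- a short direct computation (trivial once one uses the normalisation $K_i\ge1$ in force in the application). I expect this constant bookkeeping, together with the innocuous point that $R$ is genuinely two-dimensional (which is exactly where $d_1\ne d_2$ is needed), to be the only places demanding care; the combinatorial content is merely the one-dimensional fact that an interval of length $\ell$ holds at most $\ell+1$ integers, applied along two transverse directions.
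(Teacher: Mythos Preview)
The paper does not prove this lemma here; it simply cites \cite{BeGoGu16b}. Your approach --- subtracting the two linear constraints to bound $b_1$, then slicing in $b_0$ --- is the natural one, and the intermediate estimate
\[
\#G\le\bigl(2\varepsilon_1^{-1}(K_1+K_2)+1\bigr)\bigl(2\min(K_1,K_2)+1\bigr)
\]
that you obtain is correct and already suffices for the only application in the paper (the proof of Theorem~\ref{th1}, where the condition $K_i>\max\{\varepsilon_1,1\}$ is explicitly recorded).

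Your caution about the constant bookkeeping in the case $K_1<K_2$ is not only warranted but necessary: as stated, the lemma is \emph{false} without an extra size hypothesis on $K_1$. For instance, with $d_1=0$, $d_2=\tfrac{1}{10}$, $\varepsilon_1=\tfrac{1}{20}$, $K_1=0$, $K_2=10$ one has $G=\{(b_1,0):|b_1|\le100\}$ with $\#G=201$, whereas the claimed bound is $(0+1)(41)=41$. So your parenthetical remark that the comparison is ``trivial once one uses the normalisation $K_i\ge1$ in force in the application'' is not cosmetic; it pins down exactly the missing hypothesis. Under $K_1\ge1$ your product comparison does go through: writing $\text{RHS}-\text{LHS}=4\varepsilon_1^{-1}(2K_1K_2+K_1-K_2)+(4K_2-2K_1)$, both brackets are positive when $1\le K_1<K_2$. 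In short, your argument is sound, and you have correctly located a gap in the lemma's stated hypotheses rather than in your own proof.
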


\section{Proof of Theorem \ref{th1}}

Assume that $\#\left(\IA_n^2(Q)\cap\Pi\right) \ge c_{7}\cdot Q^{n}\mu_2\Pi$. Take an integer algebraic point $\boldsymbol{\alpha}\in\IA_n^2(Q)\cap\Pi$ with minimal polynomial $P$. Let us give an estimate for the polynomial $P$ at the points $d_1$ and $d_2$. Since $\alpha_i\in I_i$, we have
\[
|P^{(k)}(\alpha_{i})|\leq\sum\limits_{j=k}^{n-1}{\textstyle\frac{j!}{(j-k)!}\cdot|a_{j}|\cdot |\alpha_i|^{j-k}}+\textstyle\frac{n!}{(n-k)!}\cdot |\alpha_i|^{n-k}< \textstyle\frac{n!}{(n-k)!}\cdot \rho_n(d_i)\cdot Q
\]
for all $1\leq k\leq n$ and $Q>Q_0$. From these estimates and Taylor expansion of $P$ in the intervals $I_i$, $i=1,2$, we obtain the following inequality:
\begin{multline}\label{eq3_1}
|P(d_i)|\leq\sum\limits_{k=1}^n{\left|\textstyle\frac{1}{k!}P^{(k)}(\alpha_i)(d_i-\alpha_i)^k\right|}<\\
< \sum\limits_{k=1}^n{2^{-k}\textstyle{k\choose n}\rho_n(d_i)\cdot Q\mu_1 I_i}\leq 2^n\rho_n(d_i)\cdot Q\mu_1 I_i.
\end{multline}

Let us fix the vector $\mathbf{A}_{1}=(1,a_{n-1},\ldots,a_{2})$, where $a_{n-1},\ldots,a_{2}$ are the coefficients of the polynomial $P\in \IPl_{n}(Q)$. Denote by $\IPl_{n}(Q,\mathbf{A}_{1})\subset \IPl_n(Q)$ the subclass of polynomials $P$ with the same vector of coefficients $\mathbf{A}_{1}$ such that $P$ satisfies \eqref{eq3_1}. The number of subclasses $\IPl_{n}(Q,\mathbf{A}_{1})$ is equal to the number of vectors $\mathbf{A}_{1}$, which for $Q>Q_0$ can be estimated as follows:
\begin{equation}\label{eq3_2}
\#\{\mathbf{A}_{1}\}=(2Q+1)^{n-2}< 2^{n-1}\cdot Q^{n-2}.
\end{equation}
It should also be noted that every point of the set $\IA_n^2(Q)\cap\Pi$ corresponds to some polynomial $P\in\IPl_n(Q)$ that satisfies \eqref{eq3_1}. On the other hand, every polynomial $P\in\IPl_n(Q)$ satisfying (\ref{eq3_1}) corresponds to at most $n^2$ points of the set $\IA_n^2(Q)\cap\Pi$. This allows us to write
\[
c_{7}\cdot Q^{n+1}\mu_2\Pi< \# \left(\IA_n^2(Q)\cap\Pi\right)\leq n^2\sum\limits_{\mathbf{A}_{1}}\# \IPl_n(Q,\mathbf{A}_{1}).
\]
Thus, by the estimate \eqref{eq3_2} and Dirichlet's box principle applied to vectors $\mathbf{A}_{1}$ and polynomials $P$ satisfying \eqref{eq3_1}, there exists a vector $\mathbf{A}_{1,0}$ such that
\begin{equation}\label{eq3_2}
\#\IPl_{n}(Q,\mathbf{A}_{1,0})\ge c_{7}\cdot 2^{-n+1}n^{-2}Q^{2}\mu_2\Pi.
\end{equation}
Let us find an upper bound for the value $\#\IPl_n(Q,\mathbf{A}_{1,0})$. To do this, we fix some polynomial $P_0\in\IPl_{n}(Q,\mathbf{A}_{1,0})$ and consider the difference between the polynomials $P_0$ and $P_j\in\IPl_{n}(Q,\mathbf{A}_{1,0})$ at the points $d_i$, $i=1,2$. From the estimate \eqref{eq3_1} it follows that
\[
|P_0(d_i)-P_j(d_i)|=|(a_{0,1}-a_{j,1})d_i+(a_{0,0}-a_{j,0})|\leq 2^{n+1}\rho_n(d_i)\cdot Q\mu_1 I_i.
\]
Thus the number of different polynomials $P_j\in\IPl_{n}(Q,\mathbf{A}_{1,0})$ does not exceed the number of integer solutions of the following system:
\[
|b_1d_i+b_0|\leq 2^{n+1}\rho_n(d_i)\cdot Q\mu_1 I_i,\quad i=1,2.
\]
Now, let us use Lemma \ref{lm6} for $K_i=2^{n+1}\rho_n(d_i)\cdot Q\mu_1 I_i$. Since $\mu_1 I_i = c_6 Q^{-\gamma_i}$ and $\gamma_i < 1$, we have $K_i\ge 2^{n+1}\rho_n(d_i)c_6\cdot Q^{1-\gamma_i}>\max\{\varepsilon_1,1\}$ for $Q>Q_0$. This implies 
\[
j \leq 2^{2n+8}|d_1-d_2|^{-1}\rho_n(d_1)\rho_n(d_2)\cdot Q^2\mu_2\Pi.
\]
It follows that $\#\IPl_{n}(Q,\mathbf{A}_{1,0})\leq 2^{2n+8}|d_1-d_2|^{-1}\rho_n(d_1)\rho_n(d_2)\cdot Q^2\mu_2\Pi$, which contradicts inequality \eqref{eq3_2} for $c_{7}=2^{3n+8}n^2\rho_n(d_1)\rho_n(d_2)|d_1-d_2|^{-1}$. This leads to
\[
\#\left(\IA_n^2(Q)\cap\Pi\right)< c_{7}\cdot Q^{n}\mu_2\Pi.
\]

\section{Proof of Theorem \ref{th2}}

Since $d_1\neq d_2$ we can assume that for $Q>Q_0$ the following inequality
\[
|x_1-x_2|>\varepsilon=\textstyle\frac{|d_1-d_2|}{2}
\]
is satisfied for every point $\mathbf{x}\in\Pi$.

In order to prove the Theorem \ref{th2} we use Lemma \ref{lm_BeGoGu}. Given positive constants $u_1$ and $u_2$ satisfying the condition $u_1+u_2=n-2$ let $L=L_{n-1}(Q,\delta,\mathbf{u},\Pi)$ be the set of points $\mathbf{x}\in\Pi$ such that the following system of inequalities
\begin{equation}\label{eq4_1}
\begin{cases}
|P(x_i)|< h_{n-1}\cdot Q^{-u_i},\\
\min\limits_i\{|P'(x_i)|\}<\delta\cdot Q,\quad i=1,2,\\
\end{cases}
\end{equation}
has a solution in polynomials $P\in\Pl_{n-1}(Q)$. Lemma \ref{lm_BeGoGu} implies that the measure of the set $L$ can be estimated as
\[
\mu_2 L\leq\textstyle\frac14\cdot \mu_2\Pi
\]
for $\delta<\delta_0(n-1,\mathbf{d})<1$ and $Q>Q_0(n-1,\mathbf{u},\mathbf{d},\gamma)$.

Let us consider the set $B=\Pi\setminus L$. Using Minkowski's linear form theorem \cite[Ch. 2, \S 3]{Sch80} for every point $\mathbf{x}\in \Pi$ there exists a polynomial $P\in \Pl_{n-1}(Q)$ such that 
\[
|P(x_i)|\leq h_{n-1}\cdot Q^{-u_i},\quad i=1,2.
\]
Thus, we can assert that for every point $\mathbf{x} \in B$ there exists an irreducible polynomial $P\in\Pl_{n-1}(Q)$ such that
\[
\begin{cases}
|P(x_{i})|<h_{n-1}\cdot Q^{-u_i},\\
|P'(x_{i})|>\delta\cdot Q,\quad i=1,2,
\end{cases}
\]
and $\mu_2\, B \ge\textstyle\frac34\cdot \mu_2\Pi$.

Consider an arbitrary point $\mathbf{x} \in B$ and let us examine the successive minima
$\tau_1,\ldots,\tau_n$ of the compact convex set defined by
\begin{equation}\label{eq4_2}
\begin{cases}
|a_{n-1}x_{i}^{n-1}+\ldots+a_1x_{i}+a_0|\leq h_{n-1}Q^{-u_i},\\
|(n-1)a_{n-1}x_{i}^{n-2}+\ldots+2a_2x_{i}+a_1| \leq Q,\quad i=1,2,\\
|a_{n-1}|,\ldots,|a_2|\leq Q.
\end{cases}
\end{equation}
Assume that $\tau_1\leq\delta$. Then for sufficiently small $\delta$ there exists a polynomial $P\in\Pl_{n-1}(Q)$ such that the inequalities
\[
\begin{cases}
|P(x_{i})| < \delta h_{n-1} Q^{-u_i}<h_{n-1}Q^{-u_i},\\
|P'(x_{i})|<\delta Q,\quad i=1,2,\\
H(P) < Q
\end{cases}
\]
hold. This leads to a contradiction, since $\mathbf{x}\not\in L$. Thus $\tau_1>\delta$. Since the volume of the compact convex set defined by the inequalities \eqref{eq4_2} is at least $2^n$, it follows from Lemma \ref{lm_Minkowski} that $\tau_1\ldots\tau_{n}\leq 1$ and $\tau_{n}\leq \delta^{-n+1}$. Thus, by definition of successive minima, we can choose $n$ linearly independent polynomials $P_j(t)=a_{j,n-1}t^{n-1}+\ldots+a_{j,1}t+a_{j,0}\in\Pl_{n-1}(Q)$, $1\leq j\leq n$, satisfying 
\begin{equation}\label{eq4_3}
\begin{cases}
|P_j(x_{i})|\leq \delta^{-n+1}h_{n-1}Q^{-u_i},\\
|P_j'(x_{i})| \leq \delta^{-n+1}Q,\quad i=1,2,\\
|a_{j,k}|\leq \delta^{-n+1}Q, \quad 4\leq k\leq n-1. 
\end{cases}
\end{equation}
 Using  well-known estimates from the geometry of numbers, see \cite[pp. 219]{Cas97}, we obtain for the polynomials $P_j$, $1\leq j\leq n$ the inequality:
\[
\Delta = \det|(a_{j,k-1})^n_{j,k=1}|\leq n!.
\]
For a prime $p$ not dividing $\Delta$, Lemma \ref{lm_Bertran} yields
\begin{equation}\label{eq4_4}
n! <p< 2n!.
\end{equation}

Consider the system of linear equations for the $n$ variables $\theta_1,\ldots,\theta_n$
\begin{equation}\label{eq4_5}
\begin{cases}
x_{i}^n+p\sum\limits_{j=1}^{n}{\theta_j P_j(x_{i})}=p(n+1)\delta^{-n+1}h_{n-1}Q^{-u_i},\\
nx_{i}^{n-1}+p\sum\limits_{j=1}^{n}{\theta_j P_j'(x_i)}=pQ+ p\sum\limits_{j=1}^{n}{|P_j'(x_{i})|},\quad i=1,2,\\
\sum\limits_{j=1}^{n}{\theta_j a_{j,k-1}}=0,\quad 5\leq k\leq n.
\end{cases}
\end{equation}
It should be mentioned that in case $n=3$ the values $|P_j'(\alpha_{j,1})|$ and $|P_j'(\alpha_{j,2})|$ are equal, where $\alpha_{j,1}$ and $\alpha_{j,2}$ are the roots of the polynomial $P_j$. It means that one of the equations numbered 2 and 3 can be removed.

In order to fined the determinant of this system, we transform it as follows.
Multiply the equation numbered as $k=5, 6, \ldots, n$ by $p\cdot x_1^{k-1}$ (respectively by $p \cdot x_2^{k-1}$) and subtract it from the first (respectively the second) equation of the system \eqref{eq4_5}. Similarly multiply the equation numbered as $k=5, 6, \ldots, n$ by $p\cdot (k-1)x_1^{k-2}$ (respectively by $p \cdot (k-1)x_2^{k-2}$) and subtract it from the third (respectively the fourth) equation. After these transformations the determinant of the system \eqref{eq4_5} can be written as
\[
\hat{\Delta}(\mathbf{x})=p^4\cdot 
\begin{vmatrix}
\sum\limits_{k=0}^3a_{1,k}x_1^k & \dots & \sum\limits_{k=0}^3a_{n,k}x_1^k \\
\sum\limits_{k=0}^3a_{1,k}x_2^k & \dots & \sum\limits_{k=0}^3a_{n,k}x_2^k \\
\sum\limits_{k=1}^3k\cdot a_{1,k}x_1^{k-1} & \dots & \sum\limits_{k=1}^3k\cdot a_{n,k}x_1^{k-1} \\
\sum\limits_{k=1}^3k\cdot a_{1,k}x_2^{k-1} & \dots & \sum\limits_{k=1}^3k\cdot a_{n,k}x_2^{k-1} \\
a_{1,4} & \dots & a_{n,4}\\
\vdots & \ddots & \vdots \\
a_{1,n-1} & \dots & a_{n,n-1}
\end{vmatrix}.
\]
Let us transform the first four rows of this matrix as follows. Multiply the third (respectively the fourth) row by $\frac13 x_1$ (respectively by $\frac13 x_2$) and subtract it from the first (respectively the second) row. Then we subtract the first (respectively the third) row from the second (respectively the fourth) row and obtain the following determinant:
\[
\hat{\Delta}(\mathbf{x})=\textstyle\frac{p^4(x_2-x_1)^2}{9}\cdot\begin{vmatrix}
a_{1,2}x_1^2+2a_{1,1}x_1+3a_{1,0} & \dots & a_{n,2}x_1^2+2a_{n,1}x_1+3a_{n,0} \\
a_{1,2}(x_2+x_1)+2a_{1,1} & \dots & a_{n,2}(x_2+x_1)+2a_{n,1} \\
3a_{1,3}x_1^2+2a_{1,2}x_1+a_{1,1} & \dots & 3a_{n,3}x_1^2+2a_{n,2}x_1+a_{n,1} \\
3a_{1,3}(x_2+x_1)+2a_{1,2}& \dots & 3a_{n,3}(x_2+x_1)+2a_{n,2} \\
a_{1,4} & \dots & a_{n,4}\\
\vdots & \ddots & \vdots \\
a_{1,n-1} & \dots & a_{n,n-1}
\end{vmatrix}.
\]
Now let us subtract the second row multiplied by $x_1$ from the first row. Similarly, subtract the fourth row multiplied by $\frac12$ from the third row. Then subtract the third row multiplied by $\frac{x_2+x_1}{x_1^2}$ from the fourth row, and finally subtract the fourth row multiplied by $x_1x_2$, $x_2+x_1$ and $\frac32x_1-\frac12x_2$ from the first, the second and the third row respectively. We obtain the equation
\begin{equation}\label{eq4_6}
\hat{\Delta}(\mathbf{x})=p^4(x_2-x_1)^4\cdot 
\begin{vmatrix}
a_{1,0} & \dots & a_{n,0}\\
\vdots & \ddots & \vdots \\
a_{1,n-1} & \dots & a_{n,n-1}
\end{vmatrix}
=p^4(x_2-x_1)^4\Delta >0,
\end{equation}
since the polynomials $P_j$, $1\leq j\leq n$ are linearly independent and $|x_1-x_2|>\varepsilon>0$. By \eqref{eq4_6} the system \eqref{eq4_5} has a unique solution
$(\theta_1,\ldots,\theta_n)$. 

Consider $n$ integers $s_1,\ldots,s_n$ satisfying
\begin{equation}\label{eq4_7}
|\theta_j - s_j|\leq 1,\quad 1\leq j \leq n.
\end{equation}
and construct the following polynomial with integer coefficients
\[
P(t)=t^n+p\cdot\sum\limits_{j=1}^n{s_jP_j(t)}=t^n+p\cdot(a_{n-1}t^{n-1}+\ldots+a_1t+a_0),
\]
where $a_k=\sum\limits_{j=1}^{n}{s_ja_{j,k}}$, $0\leq k\leq n-1$. 

The polynomial $P$ is irreducible if it satisfies the conditions of Lemma \ref{lm_Eisenstein}. Let us show that there exists a suitable combinations of the coefficients $s_j$. Clearly, the first and the second condition of \eqref{eq2_1} hold for any $s_j$. It remains to show that
$a_0=s_1a_{1,0}+\ldots+s_na_{n,0}$ is not divisible by $p$. Since $p$ doesn't divide $\Delta$, there exists a number $1\leq j\leq n$ such that $a_{j,0}$ is not divisible by $p$. From the condition \eqref{eq4_7}, we have two possible values for $s_j$, which can be denoted as $s_j^1$, $s_j^2=s_j^1+1$. Since $a_{j,0}$ is not divisible by $p$, either $a_0^1=s_1a_{1,0}+\ldots+a_{j,0}s_j^1+\ldots+a_{n,0}s_n$
or $a_0^2=s_1a_{1,0}+\ldots+a_{j,0}s_j^2+\ldots+a_{n,0}s_n$ is also not divisible by $p$. Therefore, choosing $s_j$ in this manner yields an irreducible polynomial $P$.

We finally derive bounds for $|P(x_i)|$, $|P'(x_i)|$ and $H(P)$. By the inequalities \eqref{eq4_3}, \eqref{eq4_5} and \eqref{eq4_7} we obtain the following estimates:
\begin{equation}\label{eq4_8}
p\delta^{-n+1}h_{n-1}Q^{-u_i}\leq|P(x_i)|\leq p(2n+1)\delta^{-n+1}h_{n-1}Q^{-u_i},\quad i=1,2,
\end{equation} 
\begin{equation}\label{eq4_9}
pQ\leq |P'(x_i)|\leq (p+2pn\delta^{-n+1})Q,\quad i=1,2.
\end{equation}

We now estimate the height $H(P)$.
By equation $5$ to  $n$ of the system \eqref{eq4_5}, inequalities \eqref{eq4_3} and \eqref{eq4_7} we have:
\begin{equation}\label{eq4_10}
|a_k|\leq n\delta^{-n+1}Q,\quad 4\leq k\leq n-1.
\end{equation}
It remains to estimate $|a_0|$, $|a_1|$, $|a_2|$ and $|a_3|$. By \eqref{eq4_8}-- \eqref{eq4_10} and inequalities $|x_i|\leq |d_i|+\frac12$ we get: 
\begin{multline}\label{eq4_11}
|a_3x_i^3+a_2x_i^2+a_1x_i+a_0|\leq |P(x_i)|+\sum\limits_{k=4}^{n}{\left(|d_i|+1\right)^k\cdot |a_k|}<c_{9,i}Q,\\
|3a_3x_i^2+2a_2x_i+a_1|\leq |P'(x_i)|+\sum\limits_{k=4}^{n}{k\left(|d_i|+1\right)^k\cdot |a_k|}<\\
<c_{10,i}Q,\quad i=1,2,
\end{multline}
where 
\[
c_{9,i}=\begin{cases}
h_{n-1},\quad n = 3,\\
2n\delta^{-n+1}h_{n-1}\left(|d_i|+1\right)^{n},\quad n > 3;
\end{cases}
c_{10,i}=\begin{cases}
p+2pn\delta^{-n+1}h_{n-1},\quad n = 3,\\
4pn^2\delta^{-n+1}h_{n-1}\left(|d_i|+1\right)^{n},\quad n > 3.
\end{cases}
\]

Consider the system of linear equations for $a_0$, $a_1$, $a_2$ and $a_3$
\begin{equation}\label{eq4_12}
\begin{cases}
a_3x_i^3+a_2x_i^2+a_1x_i+a_0=l_{1,i},\\
3a_3x_i^2+2a_2x_i+a_1=l_{2,i},\quad i=1,2.\\
\end{cases}
\end{equation}

Since the determinant of the system \eqref{eq4_12} does not vanish, there exists a unique solution. We solve the system \eqref{eq4_12} subject to the estimates \eqref{eq4_11} and inequalities $|x_i|\leq |d_i|+\frac12$. We obtain
\[
|a_k|<c_{11}Q,\quad 0\leq k\leq 3.
\]
Hence, by \eqref{eq4_4} and \eqref{eq4_10}, we find that
\begin{equation}\label{eq4_13}
H(P)<\max\{c_{11}, n\delta^{-n+1}\}Q=Q_1.
\end{equation}

Consider the roots $\alpha_{1},\ldots,\alpha_{n}$ of the polynomial $P$, where $|x_i-\alpha_{i}|=\min\limits_{j}{|x_i-\alpha_{j}|}$, $i=1,2$.
By Lemma \ref{lm_polynomial}, the following estimates hold
\[
|x_i-\alpha_{i}|\leq n|P(x_i)|\cdot|P'(x_i)|^{-1},\quad i=1,2.
\]
By \eqref{eq4_8} and \eqref{eq4_9}, we have
\begin{equation}\label{eq4_14}
|x_i-\alpha_{i}|< n(2n+1)\delta^{-n+1}h_{n-1}Q^{-u_i-1}<c_{12}Q^{-u_i-1},\quad i=1,2.
\end{equation}
where $c_{12}=n(2n+1)\delta^{-n+1}h_{n-1}$. Let us prove that $\alpha_1,\alpha_2 \in \R$ for $u_1=u_2=\frac{n-2}{2}$. Assume the converse: let $\alpha_i\in\Comp$, then the number $\overline{\alpha_i}$ being complex conjugate to $\alpha_i$ is also a root of the polynomial $P$. Hence, by \eqref{eq4_13}, \eqref{eq4_14} and Lemma \ref{lm5} we conclude that
\[
|P(x_i)|=\prod\limits_{j=1}^n|x_i-\alpha_j|\leq c_{12}^2Q^{-n}\cdot c_{13}\cdot Q=c_{13}c_{12}^2\cdot Q^{-n+1}.
\]
This inequality contradicts \eqref{eq4_8} for $Q>Q_0$. 

Let us choose a maximal system of algebraic integer points $\Gamma = \{\boldsymbol{\gamma}_1,\ldots,\boldsymbol{\gamma}_t\}\subset\IA_n^2(Q_1)$ satisfying the condition that rectangles $\sigma(\boldsymbol{\gamma}_k)=\{|x_i-\gamma_{k,i}|<c_{12}Q^{-\frac{n}{2}},i=1,2\}$, $1\leq k\leq t$ do not intersect. 
Furthermore, let us introduce the expanded rectangles
\[
\sigma'(\boldsymbol{\gamma}_k)=\left\{|x_i-\gamma_{k,i}|<2c_{12}Q^{-\frac{n}{2}},i=1,2\right\},\quad k = 1,\ldots, t,
\]
and show that
\begin{equation}\label{eq4_16}
B\subset\bigcup_{k=1}^t \sigma'(\boldsymbol{\gamma}_k).
\end{equation}
To prove this fact, we are going to show that for any point $\mathbf{x}_1 \in B_1$ there exists a point $\boldsymbol{\gamma}_k\in\Gamma$ such that $\mathbf{x}_1\in\sigma'(\boldsymbol{\gamma}_k)$. 
Since $\mathbf{x}_1 \in B_1$, there is an algebraic integer point $\boldsymbol{\alpha}\in\IA_n^2(Q_1)$ satisfying the inequalities \eqref{eq4_14}. Thus, either $\boldsymbol{\alpha}\in\Gamma$ and $\mathbf{x}_1\in\sigma'(\boldsymbol{\alpha})$, or there exists a point $\boldsymbol{\gamma}_k\in\Gamma$ satisfying
\[
|\alpha_i-\gamma_{k,i}|\leq c_{12}Q^{-\frac{n}{2}},\quad i=1,2, 
\]
which implies that $\mathbf{x}_1\in\sigma'(\boldsymbol{\gamma}_k)$. Hence, from \eqref{eq4_16} and the estimate $\mu_2\,B\ge\frac34\cdot\mu_2\,\Pi$ we have
\[
\textstyle\frac34\cdot \mu_2\Pi \leq\mu_2 B\leq \sum\limits_{k=1}^t{\mu_2\sigma_1(\boldsymbol{\gamma}_k)}\leq t\cdot 2^4c_{12}^2Q^{-n},
\]
which yields the estimate
\[
\#\left(\IA_n^2(Q_1)\cap\Pi\right)\ge t \ge c_{8}\cdot Q^{n}\mu_2\Pi.
\]

\newpage
Anna Gusakova,\\
Institute of Mathematics,\\ 
Belorussian Academy of Sciences,\\
Surganova str. 11, 220072 Minsk, Belarus\\
E-mail: gusakova.anna.0@gmail.com\\
\\
Vasili Bernik,\\
Institute of Mathematics,\\ 
Belorussian Academy of Sciences,\\
Surganova str. 11, 220072 Minsk, Belarus\\
E-mail: vasili.bernik@mail.com\\
\\
Friedrich G{\"o}tze,\\
Faculty of Mathematics, University of Bielefeld, \\
PO Box 100131, 33501 Bielefeld, Germany\\
E-mail: goetze@math.uni-bielefeld.de
\end{document}